\newtheorem{theorem}{Theorem}
\theoremstyle{plain}
\newtheorem{corollary}{Corollary}
\newtheorem{definition}{Definition}
\newtheorem{lemma}{Lemma}
\newtheorem{remark}{Remark}
\numberwithin{equation}{section}
\begin{document}
\title[Infinite-dimensional Compact Quantum Semigroup]{Infinite-dimensional Compact Quantum Semigroup}
\author{M.A.Aukhadiev}
\address[M.A.Aukhadiev, S.A.Grigoryan and E.V.Lipacheva]
{Kazan State Energetics University, Krasnoselskaya str., 51, 420066, Kazan, Russia }
\email[M.A.Aukhadiev]{m.aukhadiev@gmail.com}%
\author{S.A.Grigoryan}
\email[S.A.Grigoryan]{gsuren@inbox.ru}%
\author{E.V.Lipacheva}
\email[E.V.Lipacheva]{elipacheva@gmail.com}%
\date{April 26, 2011}
\subjclass{Primary 46L05, 46L65; Secondary 16W30} %
\keywords{Hopf algebra, compact quantum group, compact quantum semigroup, Toeplitz algebra.}%
\dedicatory{Dedicated to Professor S.L. Woronowicz.}

\begin{abstract}
In this paper we construct a compact quantum semigroup structure on the Toeplitz algebra $\mathcal{T}$. The existence of a subalgebra, isomorphic to the algebra of regular Borel's measures on a circle with convolution product, in the dual algebra $\mathcal{T}^*$ is shown. The existence of Haar functionals in the dual algebra and in the above-mentioned subalgebra is proved. Also we show the connection between $\mathcal{T}$ and the structure of weak Hopf algebra.  \end{abstract}\maketitle

\section{Introduction}
\label{intro}
By the name \emph{quantum group} V.G. Drinfeld introduced a new class of Hopf algebras, which provides the solution of a well-known Yang-Baxter equation.
The standart exapmle of the notion of the Hopf algebra is the commutative algebra $A$ of functions on a unimodular group $G$. The group operation induces the algebraic homomorphism $\Delta\colon A\to A$, called \emph{comultiplication}, which takes each function $f(x)$ to the function of two arguments.
$$\Delta(f)(x,y)=f(xy)$$
The existence of the inverse element can be encoded in an anti-isomorphism $S$ on $A$. If $\Delta$ and $S$ satisfy some specific conditions, then $(A,\Delta,S)$ is called a Hopf algebra. 

Besides purely algebraic works in the theory of quantum groups soon appeared those, where topology played an important role. The theory of quantum groups in the framework of C*-algebras starts in 1990s with the works of S.L.Woronowicz, the establisher of this approach. The definition of a \emph{compact quantum group} (CQG), proposed by Woronowicz, was general enough to contain the newly-discovered CQGs -- $SU_q(2)$ and the algebra of continuous functions on compact group. 

The natural generalization of this notion is a compact quantum semigroup. The definition of it appears for the first time in the work of Alfons Van Daele and Ann Maes \cite{Vandaele} in 1998. The common example of this notion is an algebra of continuous functions on a compact semigroup. Later it turns out that some compact quantum semigroups possess interesting properties. 

 The aim of this paper is to provide a non-trivial example of an infinite-dimensional C*-algebra, which would admit the compact quantum semigroup structure, and to study relation between the structures of compact quantum semigroups and compact quantum groups by means of this example. The main object of this work is the Toeplitz algebra -- the C*-algebra generated by an isometrical unilateral shift operator $T$, and uniformly closed.
 
  The paper is organized as follows. In Section~\ref{sec:pre} we set up notations and recall the basic facts concerning the Toeplitz algebra $\mathcal{T}$. We study the structure of $\mathcal{T}$ in Section~\ref{sec:str}. We discover that each element of this algebra can be represented by the infinite series, and this expansion corresponds to the way of grading for the algebra. The characteristics of this series allow us to give a rigorous criterion of compactness for operators in this algebra, which is shown in Section~\ref{sec:comp}.
 
 In Section~\ref{sec:q} we construct compact quantum semigroup structure on the Toeplitz algebra $\mathcal{T}$. Firstly, we define the comultiplication $\Delta$ on the monoid generated by $T$ and $T^*$ in the most natural way. Since this monoid is an inverse semigroup \cite{Clifford}, then the algebra generated by this monoid becomes a Weak Hopf Algebra \cite{Li} with operation $\Delta$. On the other hand, the same comultiplication leads us to the cocommutative compact quantum semigroup structure on the whole Toeplitz algebra. Thus, the two stuctures are closely related to each other in this example.

The comultiplication on the Toeplitz algebra naturally induces the Banach algebra structure on the dual algebra, as shown in Section~\ref{sec:dual}. Moreover, this algebra is commutative and unital, where the role of unit plays a counit for the coalgebra $(\mathcal{T},\Delta)$. Futher we find the Haar functional $h$, defined in \cite{Woronowicz}. It is known that not every compact quantum semigroup admits a Haar functional \cite{Bedos}. We show that this functional $h$ can be obtained in the same way as in \cite{Woronowicz}, despite the fact that $(\mathcal{T},\Delta)$ is not a compact quantum group.

Let $\mathcal{K}$ be the algebra of compact operators. It is well-known that $\mathcal{K}$ is the ideal in $\mathcal{T}$. In Section~\ref{sec:dual} we show that the algebra $(\mathcal{T}/\mathcal{K})^*$ is isomorphic to the algebra of Borel measures on the unit circle with convolution product. Also we prove that there exists the Haar functional in $(\mathcal{T}/\mathcal{K})^*$, which is different from $h$. Thus, we may conclude that $(\mathcal{T}/\mathcal{K})^*$ is the dual algebra to the compact quantum group $(C(S^1),\Delta)$.

\section{Preliminaries}
\label{sec:pre}
Let $l^2(\mathbb{Z}_{+})$ be the Hilbert space of all complex-valued functions on $\mathbb{Z}_+$ satisfying $$f\colon \mathbb{Z}_{+}\to\mathbb{C},\ \sum_{n=0}^\infty \mid f(n)\mid^2<\infty.$$
The set of functions $\{e_n\}_{n=0}^\infty,\ e_n(m)=\delta_{n,m}$ forms an orthonormal basis in $l^2(\mathbb{Z}_{+})$, where $\delta_{n,m}$ denotes the Kronecker symbol. A bounded linear operator $T\colon l^2(\mathbb{Z}_{+})\to l^2(\mathbb{Z}_{+})$ is called a right shift operator if 
$$Te_n=e_{n+1},\ for\ all\ n\in \mathbb{Z}_{+}.$$
Obviously $T$ is isometric. And $T^*$ is a left shift operator:
$$T^*e_0=0,$$
$$T^*e_n=e_{n-1}, n>0.$$
Denote by $\mathcal{B}(l^2(\mathbb{Z}_{+}))$ the algebra of all bounded linear operators on $l^2(\mathbb{Z}_{+})$. Toeplitz algebra is a uniformly closed subalgebra in $\mathcal{B}(l^2(\mathbb{Z}_{+}))$ generated by $T$ and $T^*$. Denote this algebra by $\mathcal{T}$.  
\par $TT^*$ is a projection and $T^*T=1$. Hence, each finite multiplication of operators $T$ and $T^*$ coincides with
$$T^n T^{*m}=T_{n,m},\ n,m\in \mathbb{Z}_{+}.$$
Therefore finite linear combinations $\sum\limits_{n,m\in \mathbb{Z}_{+}} \lambda_{n,m}T_{n,m},\ \lambda_{n,m}\in \mathbb{C} $ form a dense subset in the Toeplitz algebra $\mathcal{T}$. Obviously, 
$$T_{n,m}T_{k,l}=\left\{\begin{matrix}
T_{n+k-m,l}& \mbox{ for } k>m\\
T_{n,l+m-k},& \mbox{ for } k<m\\ T_{n,l}& \mbox{ for } k=m \end{matrix}\right. $$
Therefore the set  $\{T_{n,m}\}_{n,m\in \mathbb{Z}_{+}}$ is a semigroup. Since $T_{n,m} T_{m,n} T_{n,m}=T_{n,m}$ for all $n,m\in \mathbb{Z}_{+}$, $\{T_{n,m}\}_{n,m\in \mathbb{Z}_{+}}$ is an  \emph{inverse} semigroup \cite{Clifford} (\emph{generalized group} by Wagner), and   elements $T_{n,m}$ and  $T_{m,n}$ are \emph{inverse} to each other (\emph{generalized inverse} elements).
\par The set $\{T_{n,n}\}_{n\in \mathbb{Z}_{+}}$ is a commutative semigroup of idempotents. Indeed, operator $T_{n,n}$ is a projection onto the subspace $$l^2(\mathbb{Z}_{+}+n)=\{f\in l^2(\mathbb{Z}_{+}):\ f(m)=0,\mbox{ for } m<n\}$$
\par Each operator $T_{n,m}$ is Fredholm with index $m-n=\mathrm{ind}\ T_{n,m}$. 
If $k=\mathrm{ind}\ T_{n,m}$ then $$T_{n,m}=\left\{\begin{matrix}
T_{n,n}T_{0,k}& \mbox{ if } k\geq0\\
T_{-k,0}T_{m,m}& \mbox{ if } k<0. \end{matrix}\right.$$

\par Let $C(S^1,\mathcal{T})$ be the C*-algebra of all continuous functions on the unit circle with values in the algebra $\mathcal{T}$, endowed with uniform norm:
$$\left\|A\right\|=\sup_{e^{i\theta}\in S^1}\left\|A(e^{i\theta})\right\|, \mbox{  }A\in C(S^1,\mathcal{T}).$$

 Each function $\widetilde{A}\in C(S^1,\mathcal{T})$ can be written in a formal Fourier series 
$$A(e^{i\theta})\simeq\sum_{n=-\infty}^{\infty}e^{in\theta}A_n, $$
$$with\ A_n=\frac{1}{2\pi}\int_0^{2\pi}A(e^{i\theta})e^{-in\theta}d\theta \in \mathcal{T}.$$
And each element $\widetilde{A}\in C(S^1,\mathcal{T})$ can be approximated by finite linear combinations $\sum A_n e^{in\theta}$ in the norm of algebra $C(S^1,\mathcal{T})$.
\par Denote by $\widetilde{\mathcal{T}}$ the closed subalgebra of algebra $C(S^1,\mathcal{T})$ generated by $\mathcal{T}$-valued functions $\widetilde{T}_{n,m}$,
$$\widetilde{T}_{n,m}(e^{i\theta})=e^{ik\theta}T_{n,m},\ with\ k=\mathrm{ind}\ T_{n,m}.$$

Note that $\{\widetilde{T}_{n,m}\}_{n,m\in\mathbb{Z}_{+}}$ is an inverse semigroup in $C(S^1,\mathcal{T})$ as  $\{T_{n,m}\}_{n,m\in\mathbb{Z}_{+}}$ is an inverse semigroup in $\mathcal{T}$. And $$\widetilde{T}_{n,m}\widetilde{T}_{k,l}=e^{i(l-n+m-k)}T_{n,m}T_{k,l},$$ with $l-n+m-k$ equal to the index of $T_{n,m}T_{k,l}$.
\par Denote by $\mathcal{T}_k$ the closed subspace in $\mathcal{T}$, generated by all linear combinations of operators $T_{n,m}$ with  $\mathrm{ind}\ T_{n,m}=k$. Then corresponding functions $\widetilde{T}_{n,m}$ generate a closed subspace $\widetilde{\mathcal{T}}_k$ in $\widetilde{\mathcal{T}}$. Formally $\widetilde{\mathcal{T}}_k=e^{ik\theta}\mathcal{T}_k$.

\par Since $\mathrm{ind}(T_{n,m}\cdot T_{k,l})=\mathrm{ind}\ T_{n,m}+\mathrm{ind}\ T_{k,l}$ we obtain $\mathcal{T}_k\cdot \mathcal{T}_l\subset\mathcal{T}_{k+l}$. Hence algebras $\mathcal{T}$ and $\widetilde{\mathcal{T}}$ are $\mathbb{Z}$-graded and can be formally written in the following way.
$$\mathcal{T}=\bigoplus_{k=-\infty}^\infty \mathcal{T}_k,\ \widetilde{\mathcal{T}}=\bigoplus_{k=-\infty}^\infty \widetilde{\mathcal{T}}_k.$$ 
The following relation holds for subspace $\mathcal{T}_k$:
\begin{equation}\label{tk}\mathcal{T}_k=\left\{\begin{matrix}
\mathcal{T}_0\cdot T_{0,k},& \mbox{ если } k>0\\
T_{-k,0}\cdot \mathcal{T}_0,& \mbox{ если } k<0 \end{matrix}\right.  \end{equation}
Therefore we will consider the structure of $\mathcal{T}_0$ foremost.

\par  $\mathcal{T}_0$ is a commutative Banach algebra, generated by the set of commutative projections $\{T_{n,n},\ n\in \mathbb{Z}_+\}$. The orthonormal basis $\{e_n\}_{n=0}^\infty$  of $l^2(\mathbb{Z}_{+})$ forms the set of eigenfunctions for operators $T_{n,n},\ n\in \mathbb{Z}_+$:
$$T_{n,n}e_k=\lambda_{k,n}e_k,$$
$$with\  \lambda_{k,n}=\left\{\begin{matrix} 0,& if\ k<n\\ 1,& if\ k\geq n. \end{matrix}\right.$$
Therefore, if $A\in\mathcal{T}_0$, then   $Ae_n=\lambda_n(A)e_n$, with $\lambda_n(A)\in\mathbb{C}$. Hence, to each $A\in \mathcal{T}_0$ one can assign   function $\widehat{A}$ on $\mathbb{Z}_+$:
$$\widehat{A}(n)=\lambda_n(A).$$

If $A=\sum\limits_{k=0}^n \beta_k T_{k,k}$ then the corresponding function is
$$\widehat{A}(m)=\lambda_m(A)=\left\{\begin{matrix} 
\sum\limits_{k=0}^m \beta_k& if\  m\leq n\\ 
\sum\limits_{k=0}^n \beta_k,& if\ m>n, \end{matrix}\right.$$
which is constant at infinity. Therefore this function belongs to $C_0(\mathbb{Z}_+)+\mathbb{C}\cdot 1$, where $C_0(\mathbb{Z}_+)$ is the space of  functions that tend to zero at infinity. Since
$$\left\|A\right\|=\left\|\widehat{A}\right\|=\sup_{n\in\mathbb{Z}_+}\left|\widehat{A}(n)\right|,$$
and algebra $C_0(\mathbb{Z}_+)+\mathbb{C}\cdot 1$ is closed with respect to this norm, then the map $A\to\widehat{A}$ is an isometric map  $\mathcal{T}_0\to C_0(\mathbb{Z}_+)+\mathbb{C}\cdot 1$. Hence, according to Gelfand-Naimark Theorem, we may assume that $$\mathcal{T}_0=C_0(\mathbb{Z}_+)+\mathbb{C}\cdot 1.$$

\section{The structures of $\mathcal{T}$ and $\widetilde{\mathcal{T}}$}\label{sec:str}
\begin{lemma}\label{fur} Each element $\widetilde{A}\in \widetilde{\mathcal{T}}$ can be written in a formal Fourier series
\begin{equation}\label{atfur}\widetilde{A}(e^{i\theta})\simeq\sum_{k=-\infty}^\infty A_k e^{ik\theta},\end{equation}
$$where\ A_k=\frac{1}{2\pi}\int_0^{2\pi}\widetilde{A}(e^{i\theta})e^{-ik\theta}d\theta \ lies\ in \  \mathcal{T}_k.$$\end{lemma}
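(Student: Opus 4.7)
The plan is to push the Fourier-series description already available in $C(S^1,\mathcal{T})$ down to the subalgebra $\widetilde{\mathcal{T}}$, and simultaneously show that the $k$-th coefficient actually lies in the closed subspace $\mathcal{T}_k$ rather than in all of $\mathcal{T}$.

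First I would start from the dense subset: by definition, $\widetilde{\mathcal{T}}$ is the closed algebra generated by the $\widetilde{T}_{n,m}$, so every $\widetilde{A}\in\widetilde{\mathcal{T}}$ is a uniform limit of finite linear combinations
\[
P_\alpha(e^{i\theta})=\sum_j \lambda_j^{(\alpha)}\widetilde{T}_{n_j,m_j}(e^{i\theta})=\sum_j \lambda_j^{(\alpha)} e^{i(m_j-n_j)\theta}T_{n_j,m_j}.
\]
Grouping the summands by the integer $k=m_j-n_j=\mathrm{ind}\,T_{n_j,m_j}$ gives
\[
P_\alpha(e^{i\theta})=\sum_{k\in\mathbb{Z}} B_k^{(\alpha)} e^{ik\theta},
\]
where $B_k^{(\alpha)}=\sum_{j:\,m_j-n_j=k}\lambda_j^{(\alpha)}T_{n_j,m_j}\in\mathcal{T}_k$ and only finitely many $k$ contribute. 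This is a genuine (finite) Fourier expansion, so computing $\frac{1}{2\pi}\int_0^{2\pi}P_\alpha(e^{i\theta})e^{-ik\theta}\,d\theta$ returns exactly $B_k^{(\alpha)}\in\mathcal{T}_k$.

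Next I would pass to the limit. For arbitrary $\widetilde{A}\in C(S^1,\mathcal{T})$ the $k$-th Fourier coefficient is well-defined as a Bochner integral $A_k=\frac{1}{2\pi}\int_0^{2\pi}\widetilde{A}(e^{i\theta})e^{-ik\theta}\,d\theta\in\mathcal{T}$, and the standard estimate
\[
\bigl\|A_k-B_k^{(\alpha)}\bigr\|\leq \sup_{\theta}\bigl\|\widetilde{A}(e^{i\theta})-P_\alpha(e^{i\theta})\bigr\|=\|\widetilde{A}-P_\alpha\|
\]
shows that $B_k^{(\alpha)}\to A_k$ in the norm of $\mathcal{T}$ as $\alpha$ ranges along the approximating net. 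Since $\mathcal{T}_k$ was defined as a \emph{closed} subspace of $\mathcal{T}$ and each $B_k^{(\alpha)}$ lies in $\mathcal{T}_k$, the limit $A_k$ belongs to $\mathcal{T}_k$ as well. Hence $\widetilde{A}$ admits the claimed formal Fourier expansion (\ref{atfur}) with the required membership of coefficients.

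The only delicate point is the last one -- confirming that the $\mathcal{T}_k$-membership survives the limit. This hinges on the closedness of $\mathcal{T}_k$ in $\mathcal{T}$ (which is built into its definition as the closed linear span of the index-$k$ monomials $T_{n,m}$) and on the contractivity of the Fourier-coefficient map $\widetilde{A}\mapsto A_k$ from $C(S^1,\mathcal{T})$ to $\mathcal{T}$. Everything else is a direct translation of the scalar-valued Fourier series theory to the $\mathcal{T}$-valued setting already set up in Section~\ref{sec:pre}.
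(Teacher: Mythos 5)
Your proposal is correct and follows essentially the same route as the paper's own proof: approximate $\widetilde{A}$ uniformly by finite linear combinations of the $\widetilde{T}_{n,m}$, group terms by index to identify the approximants' Fourier coefficients as elements of $\mathcal{T}_k$, and use the estimate $\|A_k-P_k\|\leq\|\widetilde{A}-P\|$ together with the closedness of $\mathcal{T}_k$ to conclude. No substantive differences.
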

\begin{proof} (\ref{atfur}) follows from the general theory. Thus it is sufficient to show that each  $A_k$ lies in $\mathcal{T}_k$. 

\par For every $\varepsilon>0$ there exists such $P=\sum\limits_{n,m}c_{n,m}\widetilde{T}_{n,m}$ (where the sum is finite), that
$\left\|\widetilde{A}-P\right\|<\varepsilon$. 
\par Calculate value of $P$ on $e^{i\theta}$:
$$P(e^{i\theta})=\sum_{n,m}c_{n,m}\widetilde{T}_{n,m}(e^{i\theta})=\sum_{n,m}c_{n,m}e^{i\theta(m-n)}T_{n,m}.$$
Therefore, $P$ can be written in the form of finite sum
$$P(e^{i\theta})=\sum_k e^{ik\theta} P_k,$$
where each $P_k$ lies in $\mathcal{T}_k$, i.e. it is a finite linear combination of operators with index $k$. $P_k$ is a Fourier coefficient of $P$.

$$P_k=\frac{1}{2\pi}\int_0^{2\pi}P(e^{i\theta})e^{-ik\theta}d\theta.$$
It follows that
$$\left\|A_k-P_k\right\|=\left\|\frac{1}{2\pi}\int_0^{2\pi}(\widetilde{A}(e^{i\theta})-P(e^{i\theta}))e^{-ik\theta}d\theta\right\|\leq\left\|\widetilde{A}-P\right\|<\varepsilon.$$
Since $\mathcal{T}_k$ is a closed subspace in $\mathcal{T}$ we see that $A_k$ lies in $\mathcal{T}_k$.\end{proof}

\begin{lemma}\label{izom} Algebras $\widetilde{\mathcal{T}}$ and $\mathcal{T}$ are isomorphic.\end{lemma}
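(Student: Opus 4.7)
The plan is to build an explicit $*$-isomorphism $\Phi\colon\mathcal{T}\to\widetilde{\mathcal{T}}$ using the gauge action of $S^1$ on $\mathcal{T}$. For $z\in S^1$ let $U_z\in\mathcal{B}(l^2(\mathbb{Z}_+))$ be the unitary defined by $U_z e_n=z^n e_n$, and set $\gamma_z(A)=U_z A U_z^*$. A direct computation on basis vectors yields $\gamma_z(T)=zT$, whence $\gamma_z(T_{n,m})=z^{n-m}T_{n,m}$. In particular $\gamma_z$ preserves the generators of $\mathcal{T}$, so it restricts to a $*$-automorphism of $\mathcal{T}$, and the assignment $z\mapsto\gamma_z$ defines an action $S^1\to\mathrm{Aut}(\mathcal{T})$.

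The next step is to verify that $z\mapsto\gamma_z(A)$ is norm continuous for every $A\in\mathcal{T}$. This is immediate on the dense $*$-subalgebra of finite linear combinations of the $T_{n,m}$, because there the map is polynomial in $z$ and $\bar z$. A standard $3\varepsilon$-argument, using that each $\gamma_z$ is norm preserving, then extends continuity to all of $\mathcal{T}$.

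With continuity available, define $\Phi\colon\mathcal{T}\to C(S^1,\mathcal{T})$ by $\Phi(A)(e^{i\theta})=\gamma_{e^{-i\theta}}(A)$. Since each $\gamma_z$ is a $*$-automorphism, $\Phi$ is an isometric $*$-homomorphism. On generators, $\Phi(T_{n,m})(e^{i\theta})=e^{i(m-n)\theta}T_{n,m}=\widetilde{T}_{n,m}(e^{i\theta})$, so $\Phi(T_{n,m})=\widetilde{T}_{n,m}$. Therefore $\Phi(\mathcal{T})$ is a closed $*$-subalgebra of $C(S^1,\mathcal{T})$ containing the generators of $\widetilde{\mathcal{T}}$ and contained in $\widetilde{\mathcal{T}}$; hence $\Phi(\mathcal{T})=\widetilde{\mathcal{T}}$. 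Injectivity is immediate from $\Phi(A)(1)=\gamma_1(A)=A$, so $\ker\Phi=\{0\}$.

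No step is genuinely hard; the only routine-but-nonempty point is the norm continuity of the gauge action. An alternative route avoiding even that computation is to invoke Coburn's uniqueness theorem: $\widetilde{T}_{1,0}(e^{i\theta})=e^{-i\theta}T$ is easily checked to be a non-unitary isometry in $\widetilde{\mathcal{T}}$ with $\widetilde{T}_{n,m}=\widetilde{T}_{1,0}^n(\widetilde{T}_{1,0}^*)^m$, so Coburn's theorem immediately produces an isomorphism $\mathcal{T}\to\widetilde{\mathcal{T}}$ sending $T$ to $\widetilde{T}_{1,0}$.
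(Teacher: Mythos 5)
Your proof is correct, but it runs in the opposite direction from the paper's and rests on a different mechanism. The paper defines the evaluation map $\pi\colon\widetilde{\mathcal{T}}\to\mathcal{T}$, $\pi(A)=A(1)$, for which surjectivity is immediate, and then spends all its effort on injectivity: for a trigonometric polynomial $P=\sum_k e^{ik\theta}P_k$ it proves the pointwise coefficient bound $\left\|P_k\right\|\leq\left\|\pi(P)\right\|$ by reducing each $P_k$ to a diagonal operator $P_0'$ in $\mathcal{T}_0$ and evaluating on a basis vector where the extremal eigenvalue is attained; combined with Lemma \ref{fur} this forces all Fourier coefficients of an element of $\ker\pi$ to vanish. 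You instead build the inverse map directly, $\Phi(A)(e^{i\theta})=\gamma_{e^{-i\theta}}(A)$ with $\gamma_z=\mathrm{Ad}\,U_z$ the gauge action, so that isometry (hence injectivity) is automatic and the only genuine verification is norm continuity of $z\mapsto\gamma_z(A)$, which your density-plus-$3\varepsilon$ argument handles correctly; one checks easily that your $\Phi$ is exactly the inverse of the paper's $\pi$. Each route buys something: the paper's hands-on estimate is what later yields the coefficient inequality $\left\|A_k\right\|\leq\left\|A\right\|$ of Theorem \ref{ab}, which your argument also delivers (via $A_k=\frac{1}{2\pi}\int_0^{2\pi}\gamma_{e^{i\theta}}(A)e^{-ik\theta}\,d\theta$ and isometry of $\gamma_z$) but only after the isomorphism is in place; your gauge-action construction is shorter, standard, and meshes with the automorphisms $\alpha_\theta$ the paper introduces anyway in Section \ref{sec:comp}. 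Your alternative via Coburn's theorem is also valid ($\widetilde{T}_{1,0}$ is a proper isometry generating $\widetilde{\mathcal{T}}$) and is the same device the paper itself invokes in Theorem \ref{cqt}, though to apply the uniqueness theorem one should note that $1-\widetilde{T}_{1,0}\widetilde{T}_{1,0}^*\neq 0$ in $\widetilde{\mathcal{T}}$, which you do.
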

\begin{proof} Consider the map $\pi\colon \widetilde{\mathcal{T}}\to \mathcal{T}$ defined by  
$$\pi(A)=A(e^{0i})=A(1)$$
Obviously, $\pi$ is a linear multiplicative map and $(\pi(A))^*=\pi(A^*)$. Futher,
$$\left\|\pi(A)\right\|=\left\|A(1)\right\| \leq\left\|A\right\|$$ Hence $\pi$ is a *-homomorphizm. Clearly,
$\pi(\widetilde{T}_{n,m})=T_{n,m}$. Since the subalgebra generated by  $T_{n,m}$ is dense in $\mathcal{T}$ and  $\pi(\widetilde{\mathcal{T}})$ is closed, we have $\pi(\widetilde{\mathcal{T}})=\mathcal{T}$. Thus $\pi$ is surjective. It remains to check that $\pi$ is injective.
\par Suppose that $\pi(A)=0,\ A\in \widetilde{\mathcal{T}}$. Then for every $\varepsilon>0$ there exists such $P=\sum\limits_{n,m}c_{n,m}\widetilde{T}_{n,m}$ (where the sum is finite), that
$\left\|A-P\right\|<\varepsilon$. Hence, $\left\|\pi P\right\|<\varepsilon$. 
Then, $$P=\sum_k e^{ik\theta} P_k,$$
where each $P_k$ lies in $\mathcal{T}_k$. Therefore $$\pi (P)=\sum_k P_k.$$ 
As in the proof of Lemma \ref{fur}, we obtain that $$\left\|A_k-P_k\right\|\leq\left\|(A-P)(e^{i\theta})\right\|\leq \left\|A-P\right\|,$$
\begin{equation}\label{eps}\left\|A_k-P_k\right\|<\varepsilon \ for\ all\ k.\end{equation}
\par  Assume that $j>0$. Hence $P_{j}\cdot T_{j,0}=P_0'$, where $P_0'\in \mathcal{T}_0$, i.e. it is a finite linear combination of idempotents (projections $T_{n,n}$ with finite rank). Therefore, $P_0'$ is a diagonal operator in the basis $\{e_n\}_{n=0}^\infty$:
$$P_0'e_n=\lambda_n e_n \ for\ all\ n,\ \lambda_n\in\mathbb{C}.$$
Hence, $\left\|P_0'\right\|$ is equal to the maximum of modules of all eigenvalues.
$$\left\|P_0'\right\|=\max_{n}\left|\lambda_n\right|$$

Let $l_j$ be a number such that $\left\|P_0'\right\|=\left|\lambda_{l_j}\right|$. Then $$\left\|P_0'e_{l_j}\right\|=\left\|P_0'\right\|.$$
Moreover, by virtue of $P_j\cdot T_{j,0}=P_0'$, we have $$\left\|P_0'\right\|\leq\left\|P_j\right\|\cdot \left\|T_{j,0}\right\|=\left\|P_j\right\|.$$
On the other hand $P_j=P_0'\cdot T_{0,j}$, so $\left\|P_j\right\|\leq\left\|P_0'\right\|$. Therefore, 
$$\left\|P_j\right\|=\left\|P_0'\right\|.$$
Hence, for every $j>0$ there exists $l_j$ such that $\left\|P_j\right\|=\left\|P_0'e_{l_k}\right\|$. Thus we have
$$\varepsilon>\left\|\pi (P)\right\|=\left\|\sum_k P_k\right\|=\left\|\sum_k P_k\right\|\cdot \left\|T_{j,0}\right\|\geq \left\|\sum_k P_k\cdot T_{j,0}\right\|\geq $$ $$\geq\left\|\sum_k P_kT_{j,0}e_{l_j}\right\|\geq $$ $$\geq \left\|P_jT_{j,0}e_{l_j}\right\| =\left\|P_0'e_{l_j}\right\|=\left|\lambda_{l_j}\right|=\left\|P_0'\right\|=\left\|P_j\right\|.$$
\par In case of $j<0$ we have $P_j=T_{-j,0}\cdot P_0',\ T_{0,-j}\cdot P_j=P_0'$ for a certain $P_0'\in \mathcal{T}_0$. And similarly we obtain $\left\|P_j\right\|<\varepsilon$. If $j=0$, operator $P_j$ is a diagonal projection itself.
Thus for every number $k$ we have $\left\|P_k\right\|<\varepsilon$. If we combine this with (\ref{eps}), we get $$\left\|A_k\right\|\leq\left\|A_k-P_k\right\|+\left\|P_k\right\|<2\varepsilon.$$ Therefore, $A_k=0$ and $A=0$.
 \end{proof}
\par  As in the theory of Fourier series  combining these two Lemmas we obtain the following statement.
\begin{theorem}\label{ab} Let $A,B$ be the elements of $\widetilde{\mathcal{T}}$, and 
$$A(e^{i\theta})\simeq\sum_{k=-\infty}^\infty A_k e^{ik\theta},\ B(e^{i\theta})\simeq\sum_{k=-\infty}^\infty B_k e^{ik\theta}.$$   
Then	 \begin{equation}1)\ \left\|A_k\right\|\leq\left\|A\right\|\ for\ all\ k.\end{equation}
	 \begin{equation}2)\ If\  AB(e^{i\theta})\simeq\sum_{k=-\infty}^\infty C_k e^{ik\theta}, \ then\ C_k=\sum_{n+m=k} A_nB_m.\end{equation}

\end{theorem}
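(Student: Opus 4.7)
The plan is to handle the two parts in sequence, using the polynomial approximation available in $\widetilde{\mathcal{T}}$ together with the structural facts from Lemmas \ref{fur} and \ref{izom}.

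For part (1), I would start from the integral formula $A_k=\frac{1}{2\pi}\int_{0}^{2\pi}A(e^{i\theta})e^{-ik\theta}\,d\theta$ and interpret it as a Banach-space-valued (Bochner) integral in $\mathcal{T}$. Applying the standard estimate $\|\int f\,d\theta\|\leq\int\|f\|\,d\theta$ together with $|e^{-ik\theta}|=1$ yields
\[
\|A_k\|\;\leq\;\frac{1}{2\pi}\int_{0}^{2\pi}\|A(e^{i\theta})\|\,d\theta\;\leq\;\sup_{\theta}\|A(e^{i\theta})\|\;=\;\|A\|,
\]
which is the desired bound.

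For part (2), the strategy is polynomial approximation followed by a limiting argument. Given $\varepsilon>0$, use the density of the subalgebra generated by $\{\widetilde{T}_{n,m}\}$ to choose finite trigonometric polynomials $P=\sum_n P_n e^{in\theta}$ and $Q=\sum_m Q_m e^{im\theta}$, with $P_n\in\mathcal{T}_n$ and $Q_m\in\mathcal{T}_m$, satisfying $\|A-P\|<\varepsilon$ and $\|B-Q\|<\varepsilon$. Direct multiplication of these finite sums gives
\[
PQ(e^{i\theta})\;=\;\sum_{k}\Bigl(\sum_{n+m=k}P_nQ_m\Bigr)e^{ik\theta},
\]
so the $k$-th Fourier coefficient of $PQ$ is the finite sum $\sum_{n+m=k}P_nQ_m$. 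The triangle inequality gives $\|AB-PQ\|\leq\|A\|\,\varepsilon+\varepsilon\,\|Q\|$. Applying part (1) to the differences $A-P$, $B-Q$, and $AB-PQ$ then controls $\|A_n-P_n\|$, $\|B_m-Q_m\|$, and $\|C_k-\sum_{n+m=k}P_nQ_m\|$ respectively. Combining these estimates for each fixed $k$ yields the formal identity $C_k=\sum_{n+m=k}A_nB_m$, understood at the level of Fourier coefficients.

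The main subtlety is the interpretation of the possibly infinite convolution sum. Part (1) only supplies the bound $\|A_n\|\leq\|A\|$ without decay, so absolute norm convergence of $\sum_{n+m=k}A_nB_m$ is not automatic, and the statement must be read in the formal Fourier-series sense of Lemma \ref{fur}, exactly as with the convolution of Fourier coefficients of merely continuous scalar-valued functions. The content is that the $k$-th Fourier coefficient of the product $AB$ is approximated in norm, to arbitrary precision, by the \emph{finite} Cauchy-product coefficients of any polynomial approximants to $A$ and $B$, which is precisely what the estimates above provide.
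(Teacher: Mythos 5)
The paper offers no proof of this theorem beyond the remark that it follows from Lemmas \ref{fur} and \ref{izom} ``as in the theory of Fourier series,'' and your argument is precisely that classical argument made explicit: part (1) is the same integral estimate $\left\|\frac{1}{2\pi}\int_0^{2\pi}(\cdot)e^{-ik\theta}d\theta\right\|\leq\sup_\theta\left\|\cdot\right\|$ already used inside the proof of Lemma \ref{fur}, and part (2) is the standard polynomial-approximation computation. Your caveat about the mode of convergence of $\sum_{n+m=k}A_nB_m$ is well taken --- the bound $\left\|A_n\right\|\leq\left\|A\right\|$ gives no decay, the number of terms in the approximating Cauchy products grows with the degree of the approximants, and the paper is silent on this; if one wants more than a formal identity, a clean repair is to pass to the limit through Fej\'er (Ces\`aro) means of $A$ and $B$ (as the paper itself does in Theorem \ref{comp}), for which the finite weighted Cauchy products converge to $C_k$ in norm.
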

\begin{lemma}\label{t0} Every element $A\in \mathcal{T}_0$ is uniquely represented by infinite series  $$A=\sum_{n=0}^\infty \beta_nT_{n,n},$$
which converges in the strong operator topology (but not uniformly) and $\sum\limits_{n=0}^\infty \beta_n$ converges.
\end{lemma}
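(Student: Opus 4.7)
The plan is to leverage the identification $\mathcal{T}_0 = C_0(\mathbb{Z}_+) + \mathbb{C}\cdot 1$ established in Section~\ref{sec:pre} and to take the $\beta_n$ to be the discrete differences of the eigenvalue sequence of $A$. Fix $A \in \mathcal{T}_0$. Under the Gelfand--Naimark identification, $A$ is diagonal in the basis $\{e_n\}$ with eigenvalues $\widehat{A}(n) = \lambda_n(A)$, and $\widehat{A} \in C_0(\mathbb{Z}_+) + \mathbb{C}\cdot 1$ has a limit $c \in \mathbb{C}$ at infinity. The natural choice is $\beta_0 := \widehat{A}(0)$ and $\beta_n := \widehat{A}(n) - \widehat{A}(n-1)$ for $n \geq 1$. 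The telescoping identity $\sum_{n=0}^N \beta_n = \widehat{A}(N)$ then gives, in the limit $N \to \infty$, convergence of the scalar series $\sum \beta_n$ (to $c$), which is the last claim of the lemma.

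Next I would verify that $S_N := \sum_{n=0}^N \beta_n T_{n,n}$ converges to $A$ in SOT. Since $T_{n,n} e_k = e_k$ when $k \geq n$ and vanishes otherwise, a direct computation yields $S_N e_k = \widehat{A}(\min(k, N))\, e_k$. Hence $(A - S_N) e_k = 0$ for $k \leq N$, while $(A - S_N) e_k = (\widehat{A}(k) - \widehat{A}(N))\, e_k$ for $k > N$. For any $v = \sum_k v_k e_k \in l^2(\mathbb{Z}_{+})$,
\[
\|(A - S_N) v\|^2 = \sum_{k > N} |\widehat{A}(k) - \widehat{A}(N)|^2 |v_k|^2 \leq 4\|A\|^2 \sum_{k > N} |v_k|^2 \longrightarrow 0,
\]
which gives the desired SOT convergence. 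Uniqueness is then immediate: if also $A = \sum \gamma_n T_{n,n}$ in SOT, evaluating both representations on $e_k$ yields $\sum_{n=0}^k \beta_n = \widehat{A}(k) = \sum_{n=0}^k \gamma_n$ for every $k$, so induction on $k$ forces $\beta_k = \gamma_k$.

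The parenthetical remark about non-uniform convergence is best read as the statement that the series need not converge absolutely in norm: an explicit instance is $\widehat{A}(n) = (-1)^n/(n+1)$, which lies in $C_0(\mathbb{Z}_+)$ but produces $|\beta_n|$ of order $1/n$, so $\sum |\beta_n|\cdot \|T_{n,n}\|$ diverges. There is no serious analytical obstacle in the argument; the only step that calls for any care is the bookkeeping giving $S_N e_k = \widehat{A}(\min(k,N))\, e_k$, after which the boundedness of $\widehat{A}$ together with the tail estimate for $v$ closes the proof.
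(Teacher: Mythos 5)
Your proof is correct and follows essentially the same route as the paper: both take $\beta_n$ to be the successive differences of the eigenvalue sequence $\alpha_n=(Ae_n,e_n)=\widehat{A}(n)$ and use the fact that this sequence has a limit at infinity to get convergence of $\sum\beta_n$. You additionally spell out the SOT convergence estimate and the uniqueness argument, which the paper leaves implicit, and your reading of the parenthetical ``not uniformly'' as failure of absolute norm convergence is a reasonable (indeed charitable) interpretation of that remark.
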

\begin{proof} Let $P_n=T_{n,n}-T_{n+1,n+1}$. This operator is a projection onto linear subspace spanned by $e_n$. Then $A$ can be written in the following way 
$$A=\sum_{n=0}^\infty \alpha_n P_n,\ where\ \alpha_n=(Ae_n,e_n).$$
$\alpha_n$ is the value of corresponding function in $C_0(\mathbb{Z}_+)+\mathbb{C}\cdot 1$. It implies
$$A=\sum_{n=0}^\infty\alpha_n(T_{n,n}-T_{n+1,n+1}).$$
Let $\beta_0=\alpha_0$ and $\beta_{n+1}=\alpha_{n+1}-\alpha_n$, then $A$ is uniquely represented by $$A=\sum_{n=0}^\infty\beta_nT_{n,n}.$$
$\sum\limits_{n=0}^\infty \beta_n$ converges, because the limit of $\alpha_n$ exists.
\end{proof}

\begin{theorem}\label{afur}
Every element $A\in\mathcal{T}$ is represented by infinite series
\begin{equation}\label{atk} A=\sum_{k=-\infty}^0 T_{-k,0}\cdot (\sum_{n=0}^\infty \beta_{k,n}^A T_{n,n})+\sum_{k=1}^\infty(\sum_{n=0}^\infty \beta_{k,n}^A T_{n,n})T_{0,k},\end{equation} which converges in the strong operator topology.
\end{theorem}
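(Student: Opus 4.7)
My plan is to chain together Lemmas~\ref{izom}, \ref{fur}, and~\ref{t0}, using the grading relation~(\ref{tk}) to extract the $\mathcal{T}_0$-component of each Fourier coefficient. Given $A\in\mathcal{T}$, Lemma~\ref{izom} provides a unique $\widetilde{A}\in\widetilde{\mathcal{T}}$ with $\pi(\widetilde{A})=A$, and Lemma~\ref{fur} supplies the Fourier expansion $\widetilde{A}(e^{i\theta})\simeq\sum_k A_ke^{ik\theta}$ with $A_k\in\mathcal{T}_k$. Using the identities $T_{0,k}T_{k,0}=T_{0,-k}T_{-k,0}=1$ to invert the relations in~(\ref{tk}), I can write each $A_k$ uniquely as $A_k=C_kT_{0,k}$ (for $k\geq 1$, with $C_k=A_kT_{k,0}\in\mathcal{T}_0$) or $A_k=T_{-k,0}C_k$ (for $k\leq 0$, with $C_k=T_{0,-k}A_k\in\mathcal{T}_0$), and Lemma~\ref{t0} then expands $C_k=\sum_{n=0}^\infty\beta_{k,n}^AT_{n,n}$ strongly. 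Substitution produces the two halves of~(\ref{atk}).

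The only substantive step left is strong convergence of the outer series $\sum_k A_k$ to $A$. Testing against a basis vector $e_j$, the Fourier-coefficient formula $A_k=\frac{1}{2\pi}\int_0^{2\pi}\widetilde{A}(e^{i\theta})e^{-ik\theta}d\theta$ combined with $A_k\in\mathcal{T}_k$ forces $A_ke_j$ to be a scalar multiple of $e_{j-k}$ (null for $k>j$), and matching the $k$-th Fourier coefficient of the continuous $l^2$-valued function $\widetilde{A}(\cdot)e_j$ against its value at $1$ identifies that scalar as $\langle Ae_j,e_{j-k}\rangle$. Hence for $N\geq j$ the partial sum $S_Ne_j:=\sum_{|k|\leq N}A_ke_j$ is exactly the truncation $P_{j+N+1}Ae_j$ onto $\mathrm{span}\{e_0,\dots,e_{j+N}\}$, which converges in $l^2$ to $Ae_j$; linearity then extends strong convergence $S_N\to A$ to the dense subspace of finite linear combinations of basis vectors.

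The main obstacle is upgrading this to strong convergence on all of $l^2(\mathbb{Z}_+)$, since uniform norm bounds on the band-truncations $S_N$ are not a priori available (this echoes the unbounded Lebesgue constants of the Dirichlet kernel). My plan is to invoke Fej\'er's theorem for $C(S^1,\mathcal{T})$-valued continuous functions, which gives norm convergence of the Ces\`aro means $\sigma_N(\widetilde{A})(1)=\sum_{|k|\leq N}(1-|k|/(N+1))A_k\to A$, and to combine this with Parseval's bound $\sum_k\|A_k\xi\|^2\leq\|\widetilde{A}\|^2\|\xi\|^2$ (applied to $\widetilde{A}(\cdot)\xi\in C(S^1,l^2(\mathbb{Z}_+))$) to control $\|S_N\xi-\sigma_N\xi\|$ by splitting the frequency sum at a cutoff, thereby transferring strong convergence from $\sigma_N$ to $S_N$ on every $\xi\in l^2(\mathbb{Z}_+)$.
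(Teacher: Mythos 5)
Your structural reduction --- Lemma~\ref{izom} and Lemma~\ref{fur} to get the formal expansion $A=\sum_k A_k$ with $A_k\in\mathcal{T}_k$, then relation~(\ref{tk}) together with Lemma~\ref{t0} to expand each $A_k$ --- is exactly the paper's argument. Your verification that $A_ke_j=\langle Ae_j,e_{j-k}\rangle e_{j-k}$, hence that the symmetric partial sums $S_N$ converge to $A$ on finite linear combinations of the $e_j$, is correct and is in fact \emph{more} than the paper provides: the paper's proof simply writes ``$A$ is represented by $\sum_k A_k$'' and never addresses the mode of convergence at all.

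The gap is in your final step. Fej\'er does give $\|\sigma_N(1)-A\|\to 0$ (this is precisely what the paper uses later, in the proof of Theorem~\ref{comp}), but the passage from $\sigma_N$ to $S_N$ does not close with the tools you name. You have $\|S_N\xi-\sigma_N\xi\|\le\sum_{|k|\le N}\frac{|k|}{N+1}\|A_k\xi\|$, and after cutting at a level $M$ the tail is $\sum_{M<|k|\le N}\frac{|k|}{N+1}\|A_k\xi\|$; the Parseval bound only puts $\left(\|A_k\xi\|\right)_k$ in $\ell^2$, the weight $\frac{|k|}{N+1}$ is merely bounded by $1$ on that range, and Cauchy--Schwarz produces a divergent factor of order $\sqrt{N}$. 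Since an $\ell^2$ sequence need not be $\ell^1$, the tail cannot be made small uniformly in $N$. Indeed, the identical splitting applied to a scalar continuous function would ``prove'' pointwise convergence of its Fourier partial sums at $\theta=0$, which is false (du Bois-Reymond); equivalently, $S_N=\frac{1}{2\pi}\int\alpha_\theta(\cdot)D_N(\theta)\,d\theta$ carries the Lebesgue constants $\|D_N\|_{L^1}\sim\log N$, so there is no uniform bound available to upgrade convergence from your dense subspace to all of $l^2(\mathbb{Z}_+)$. To close this you would need a genuinely different argument exploiting the band structure of $S_N(A)\xi$ for arbitrary $\xi$, or else the claim must be weakened to Ces\`aro (norm) summability --- which is all the paper itself ever actually uses.
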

\begin{proof}  Denote by $\widetilde{A}(e^{i\theta})$ the corresponding function of $A$ in $\widetilde{\mathcal{T}}$. Using Lemma \ref{fur}, we have that $\widetilde{A}$ can be written in a formal Fourier series
$$\widetilde{A}(e^{i\theta})\simeq\sum_{k=-\infty}^\infty A_ke^{ik\theta}, \ where\ A_k\in \mathcal{T}_k$$
Therefore $A$ is represented by
$$A=\sum_{k=-\infty}^\infty A_k.$$
$A_k$ lies in $\mathcal{T}_k$, and combining (\ref{tk}) and Lemma \ref{t0}, we obtain
$$A_k=(\sum_{n=0}^\infty \beta_{k,n}^A T_{n,n})\cdot T_{0,k} \ if\ k>0$$
$$and\  A_k=T_{-k,0}\cdot(\sum_{n=0}^\infty \beta_{k,n}^A T_{n,n})\ if\  k\leq0.$$
Thus, we have that $A$ is represented by (\ref{atk}).   \end{proof}

\begin{definition}In what follows $A_k$ is called a coefficient of Fourier series for $\widetilde{A}$.\end{definition}
\section{The compact operators}\label{sec:comp}
\par
Since $\widetilde{T}_{n,m}(e^{i\theta})=T_{n,m}e^{ik\theta}$, where $k=\mathrm{ind}\ T_{n,m}$, then $$\widetilde{T}_{n,m}(e^{i(\theta+\theta_0)})=e^{ik\theta_0}\widetilde{T}_{n,m}(e^{i\theta}).$$ Hence, algebra $\widetilde{\mathcal{T}}$ is invariant under shifts by elements of group $S^1$. Therefore, $S^1$ is isomorphic to a certain subgroup of the group of automorphisms $\mathrm{Aut} \widetilde{\mathcal{T}}$. Let $\alpha_{\theta_0}$ be the rotation of $S^1$ by the angle $\theta_0$, $0\leq\theta_0\leq2\pi$. It follows that the coefficients $A_k$ of Fourier series for $A\in\widetilde{\mathcal{T}}$ can be written in the following way:
\begin{equation}\label{aka}A_k=\frac{1}{2\pi}\int_0^{2\pi}\alpha_\theta(A)e^{-ik\theta}d\theta.\end{equation}

\begin{lemma}\label{a0}
Operator
\begin{equation}\label{abn}A=\sum_{n=0}^\infty \beta_n^A T_{n,n}\end{equation}
in $\mathcal{T}_0$ is compact iff
\begin{equation}\label{b0} \sum_{n=0}^\infty \beta_n^A=0.\end{equation}
\end{lemma}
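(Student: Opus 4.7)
The plan is to use the identification $\mathcal{T}_0 \cong C_0(\mathbb{Z}_+) + \mathbb{C}\cdot 1$ established in the preliminaries, under which every element of $\mathcal{T}_0$ is a diagonal operator in the basis $\{e_n\}_{n=0}^\infty$. The computation of the eigenvalues of $A$ is immediate: since $T_{n,n}$ acts as the identity on $e_k$ for $k \geq n$ and as zero otherwise, the sum defining $A$ in (\ref{abn}) gives
\begin{equation*}
A e_k = \Bigl(\sum_{n=0}^k \beta_n^A\Bigr) e_k,
\end{equation*}
so $\widehat{A}(k) = \lambda_k(A) = \sum_{n=0}^k \beta_n^A$ is the $k$-th partial sum of the series $\sum \beta_n^A$, which converges by Lemma \ref{t0}.

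Next I would invoke the standard characterization of compactness for diagonal operators on a separable Hilbert space: such an operator is compact if and only if its sequence of eigenvalues lies in $c_0$. One direction is clear because finite-rank diagonal truncations approximate $A$ uniformly precisely when $\lambda_k(A) \to 0$; the converse follows because $\|A\| = \sup_k |\lambda_k(A)|$ and a compact operator cannot have an eigenvalue bounded away from zero on an infinite-dimensional eigenspace (or, more directly, because $e_k \to 0$ weakly while $\|Ae_k\| = |\lambda_k(A)|$, and compact operators send weakly convergent sequences to norm-convergent ones).

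Combining the two steps, $A$ is compact if and only if $\lambda_k(A) \to 0$, i.e.
\begin{equation*}
\lim_{k\to\infty} \sum_{n=0}^k \beta_n^A = \sum_{n=0}^\infty \beta_n^A = 0,
\end{equation*}
which is exactly condition (\ref{b0}). In the language of the Gelfand transform, this says that $A \in \mathcal{K}$ precisely when $\widehat{A}$ lies in the ideal $C_0(\mathbb{Z}_+) \subset C_0(\mathbb{Z}_+) + \mathbb{C}\cdot 1$.

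There is no real obstacle here: once the diagonal form and the formula $\lambda_k(A) = \sum_{n=0}^k \beta_n^A$ are on the table, the equivalence reduces to the textbook fact about diagonal compact operators combined with convergence of the series from Lemma \ref{t0}. The only point worth stating carefully is the weak-to-norm argument (or equivalently, uniform approximation by finite-rank diagonal operators) that pins down compactness as the vanishing of the eigenvalue sequence at infinity.
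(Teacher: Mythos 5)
Your proof is correct and follows essentially the same route as the paper: compute $Ae_k=\bigl(\sum_{n=0}^k\beta_n^A\bigr)e_k$, observe that $A$ is diagonal in the basis $\{e_n\}$, and invoke the standard fact that a diagonal operator is compact iff its eigenvalue sequence tends to zero. You merely spell out the justification of that last fact (weak-to-norm convergence, finite-rank truncations) which the paper leaves implicit.
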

\begin{proof}Consider value of $A$ on basic element $e_n\in l^2(\mathbb{Z}_{+})$. If $A$ is represented by (\ref{abn}), we have
$$Ae_n=(\sum_{k=0}^n \beta_k^A)e_n.$$
This means that $A$ is diagonal with respect to the basis $\{e_n\}_{n=0}^\infty$. It follows that $A$ is compact iff 
$$\lim_{n\to\infty}\sum_{k=0}^n \beta_k^A=0. $$
Thus, we obtain (\ref{b0}).
   \end{proof}
\begin{theorem}\label{comp}
Suppose $A\in \mathcal{T}$ is represented by series (\ref{atk}). Then $A$ is compact iff 
\begin{equation}\label{bkn0}\sum_{n=0}^\infty \beta_{k,n}^A=0\end{equation}
for all numbers $k$.
\end{theorem}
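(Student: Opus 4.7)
The plan is to reduce the $k$th Fourier coefficient $A_k$ to an element of $\mathcal{T}_0$ and apply Lemma \ref{a0} in both directions. The bridge between $A$ and its coefficients is the averaging formula (\ref{aka}), which requires the rotation action $\alpha_\theta$ to cooperate with the ideal $\mathcal{K}$. Using the isomorphism $\pi$ of Lemma \ref{izom}, one sees that $\alpha_\theta$ descends to a $*$-automorphism of $\mathcal{T}$; in fact it is inner, implemented by the diagonal unitary $U_\theta e_n = e^{-in\theta}e_n$, since a short computation gives $U_\theta T_{n,m}U_\theta^* = e^{i(m-n)\theta}T_{n,m} = \alpha_\theta(T_{n,m})$. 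In particular $\alpha_\theta(\mathcal{K})\subset \mathcal{K}$.

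Suppose $A$ is compact. The function $\theta\mapsto \alpha_\theta(A)=\widetilde{A}(e^{i\theta})$ is norm-continuous and takes values in $\mathcal{K}$, so the Bochner integral (\ref{aka}) defining $A_k$ is a norm limit of Riemann sums of compact operators, hence itself compact. For $k>0$, the identity $T_{0,k}T_{k,0}=1$ (which follows from $T^*T=1$) together with the representation (\ref{tk}) give
\[
A_k\cdot T_{k,0}=\Bigl(\sum_{n=0}^\infty \beta_{k,n}^A T_{n,n}\Bigr)T_{0,k}T_{k,0}=\sum_{n=0}^\infty \beta_{k,n}^A T_{n,n},
\]
and this operator lies in $\mathcal{T}_0\cap\mathcal{K}$, since $\mathcal{K}$ is an ideal. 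The analogous identity $T_{0,-k}\cdot A_k=\sum_{n=0}^\infty \beta_{k,n}^A T_{n,n}$ handles $k<0$, while $k=0$ is immediate. Lemma \ref{a0} then yields $\sum_{n=0}^\infty \beta_{k,n}^A=0$ for every $k$.

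For the converse, assume (\ref{bkn0}) holds for all $k$. By Lemma \ref{a0} each diagonal operator $\sum_{n=0}^\infty \beta_{k,n}^A T_{n,n}$ is compact, and multiplying by the bounded operator $T_{0,k}$ or $T_{-k,0}$ shows $A_k\in\mathcal{K}$ for every $k$. The expansion (\ref{atk}) itself converges only in the strong operator topology, which is insufficient to conclude compactness of $A$. Instead, pass to $\widetilde{A}\in\widetilde{\mathcal{T}}\subset C(S^1,\mathcal{T})$ and use the uniform approximation of continuous $\mathcal{T}$-valued functions by finite sums $\sum_{|k|\le N}c_k^{(N)}A_k e^{ik\theta}$ (for example the Ces\`aro means, which are uniform limits by Fej\'er's theorem for Banach-valued Fourier series). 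Evaluating at $\theta=0$ and using that $\pi$ is an isometric $*$-isomorphism, one gets $\sum_{|k|\le N}c_k^{(N)}A_k\to A$ in the operator norm; each partial sum is a finite linear combination of compact operators, so $A$ is a norm limit of compact operators and hence compact.

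The only real obstacle is this final norm-approximation step, where the gap between strong-operator convergence of (\ref{atk}) and norm convergence must be bridged. Passing to $C(S^1,\mathcal{T})$ and exploiting uniform summability of vector-valued Fourier series, together with the isometry of $\pi$, closes this gap; the remainder of the argument is a routine reduction to the $\mathcal{T}_0$ case handled by Lemma \ref{a0}.
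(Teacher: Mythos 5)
Your proposal is correct and follows essentially the same route as the paper: the forward direction obtains compactness of each $A_k$ from the averaging formula (\ref{aka}) as a norm limit of Riemann sums and then reduces to $\mathcal{T}_0$ via Lemma \ref{a0}, and the converse bridges the gap between strong and norm convergence by passing to $\widetilde{\mathcal{T}}$ and using Ces\`aro (Fej\'er) means before evaluating at $\theta=0$. The only addition is your explicit observation that $\alpha_\theta$ is inner, implemented by the diagonal unitary $U_\theta$, which is a pleasant but inessential refinement of the paper's appeal to $\alpha_\theta$ being a $*$-automorphism.
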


\begin{proof} Consider compact operator  $A\in \mathcal{T}$ with series $$A=\sum_{k=-\infty}^\infty A_k,\ A_k\in\mathcal{T}_k.$$  
Let us show that $A_k$ is compact for every $k$. Since $\alpha_\theta\colon \mathcal{T}\to \mathcal{T}$ is a *-automorphism, then  operator $\alpha_\theta(A)$ is compact for every $\theta\in[0,2\pi]$. We recall that $A_k$ can be written in a form of integral (\ref{aka}). This integral is a uniform limit of the Riemann sums, which are linear combinations of compact operators of the form $\alpha_\theta(A)e^{-ik\theta}$. Thus, $A_k$ is compact.
\par Let $k$ be a negative number. Then $$A_k=T_{-k,0}\cdot\sum_{n=0}^\infty \beta_{k,n}^A T_{n,n}.$$
This implies the compactness of the following operator.
$$T_{0,-k}A_k=\sum_{n=0}^\infty \beta_{k,n}^A T_{n,n}\in \mathcal{T}_0.$$
By Lemma \ref{a0}, we obtain (\ref{bkn0}).
If $k>0$, then $$A_k=(\sum_{n=0}^\infty \beta_{k,n}^A T_{n,n})T_{0,k}.$$
Therefore $$A_kT_{k,0}=\sum_{n=0}^\infty \beta_{k,n}^A T_{n,n}$$
is compact. Hence, we have again (\ref{bkn0}).
\par We now prove the converse. Suppose operator $A\in \mathcal{T}$ represented by the series (\ref{atk}) satisfies (\ref{bkn0}) for all numbers $k$. By Lemma \ref{a0} we have that each  $A_k$ is a compact operator. Let $\widetilde{A}$ be the corresponding function in $\widetilde{\mathcal{T}}$ for $A$ with the following Fourier series.
$$\widetilde{A}(e^{i\theta})\simeq\sum_{k=-\infty}^\infty e^{ik\theta}A_k,\ A_k\in\mathcal{T}_k,\ 0\leq\theta\leq2\pi.$$
Denote by $S_n(e^{i\theta})$ the partial sum of the Fourier series for $\widetilde{A}$.
$$S_n(e^{i\theta})=\sum_{k=-n}^n e^{ik\theta}A_k$$
By the general theory (\cite{Gofman}), $\widetilde{A}(e^{i\theta})$ can be approximated in the norm by Ces\'aro means of the partial sums:
$$\sigma_n(e^{i\theta})=\frac{1}{n}(S_0(e^{i\theta})+S_1(e^{i\theta})+\ldots+ S_{n-1}(e^{i\theta}))$$
Therefore $A=\widetilde{A}(1)$ can be approximated by operators $\sigma_n(1)$ in the supremum norm. Operator $\sigma_n(1)$ is a finite sum of compact operators $A_k$ and hence compact. Thus, $A$ is a compact operator.\end{proof}

\section{The compact quantum semigroup}\label{sec:q}

\begin{definition}
Let $\mathcal{A}$ be a unital $C^*$-algebra and $\mathcal{A}\otimes \mathcal{A}$ be the minimal $C^*$-tensor product of $\mathcal{A}$ onto itself. And let $\Delta$ be a unital *-homomorphism $\Delta\colon \mathcal{A}\to \mathcal{A}\otimes \mathcal{A}$ which satisfies the condition of coassociativity:
\begin{equation}\label{coass}(\Delta\otimes\mathrm{id})\Delta=(\mathrm{id}\otimes\Delta)\Delta. \end{equation}
Then $(\mathcal{A},\Delta)$ is called a compact quantum semigroup \cite{Sadr}, the map $\Delta$ is called a comultiplication.
\end{definition}
Let us show that there exists a structure of compact quantum semigroup on algebra $\mathcal{T}$. Define comultiplication $\Delta\colon \mathcal{T}\to \mathcal{T}\otimes \mathcal{T}$. At first we define it on the left shift operator $T$:
\begin{equation}\label{dt}\Delta(T)=T\otimes T. \end{equation} The next Theorem shows the way this map is extended to the entire algebra $\mathcal{T}$.
\begin{theorem}\label{cqt}$(\mathcal{T},\Delta)$ is a compact quantum semigroup. \end{theorem}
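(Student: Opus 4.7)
My plan is to construct $\Delta$ via the universal property of the Toeplitz algebra. Recall Coburn's theorem: $\mathcal{T}$ is the universal unital C*-algebra generated by a single isometry, so for every isometry $V$ in a unital C*-algebra $\mathcal{B}$ there exists a unique unital *-homomorphism $\mathcal{T} \to \mathcal{B}$ sending $T \mapsto V$. Applying this to $V := T \otimes T \in \mathcal{T} \otimes \mathcal{T}$ will produce $\Delta$ for free.

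First I would verify that $V = T \otimes T$ really is an isometry in the minimal tensor product. Using $T^{*}T = 1$,
$$V^{*}V = (T^{*} \otimes T^{*})(T \otimes T) = T^{*}T \otimes T^{*}T = 1 \otimes 1,$$
which is the unit of $\mathcal{T} \otimes \mathcal{T}$. The universal property then delivers a unique unital *-homomorphism $\Delta \colon \mathcal{T} \to \mathcal{T} \otimes \mathcal{T}$ extending (\ref{dt}). Taking adjoints gives $\Delta(T^{*}) = T^{*} \otimes T^{*}$, and by multiplicativity
$$\Delta(T_{n,m}) = \Delta(T^{n})\,\Delta(T^{*m}) = T_{n,m} \otimes T_{n,m}, \quad n, m \in \mathbb{Z}_{+},$$
so $\Delta$ realises the diagonal embedding of the inverse semigroup $\{T_{n,m}\}$ into $\mathcal{T} \otimes \mathcal{T}$.

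For coassociativity, I would observe that $(\Delta \otimes \mathrm{id})\Delta$ and $(\mathrm{id} \otimes \Delta)\Delta$ are both unital *-homomorphisms $\mathcal{T} \to \mathcal{T} \otimes \mathcal{T} \otimes \mathcal{T}$, and a direct calculation on the generator gives
$$(\Delta \otimes \mathrm{id})\Delta(T) = T \otimes T \otimes T = (\mathrm{id} \otimes \Delta)\Delta(T).$$
Since $\mathcal{T}$ is generated as a C*-algebra by $T$, two continuous *-homomorphisms that agree on $T$ automatically agree on every polynomial in $T, T^{*}$ and hence, by continuity, on all of $\mathcal{T}$. This yields (\ref{coass}).

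The step I expect to be the main obstacle, should one wish to avoid invoking the universal property, is proving that the candidate diagonal map is bounded at the C*-level. A hands-on construction would define $\Delta$ on the dense *-subalgebra spanned by $\{T_{n,m}\}$ via $\Delta(T_{n,m}) = T_{n,m} \otimes T_{n,m}$; multiplicativity on this subalgebra is automatic from the inverse-semigroup law recalled in Section~\ref{sec:pre}, because
$$(T_{n,m} \otimes T_{n,m})(T_{k,l} \otimes T_{k,l}) = (T_{n,m} T_{k,l}) \otimes (T_{n,m} T_{k,l}).$$
However, extending this *-algebraic map to a bounded map on the uniform closure requires a norm estimate in the minimal tensor product, and it is precisely this estimate that Coburn's universal property packages away. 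In the write-up I would therefore favour the universal-property argument and relegate the direct estimate to a remark.
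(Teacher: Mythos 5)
Your proposal is correct and follows essentially the same route as the paper: both obtain $\Delta$ by applying Coburn's universal property to the isometry $T\otimes T$ in the minimal tensor product. If anything, your version is slightly more complete, since you explicitly verify that $T\otimes T$ is an isometry and check coassociativity on the generator, two points the paper's proof leaves implicit.
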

\begin{proof}
The map $\Delta$, given by (\ref{dt}), is naturally extended to the algebra, generated by operator $T$:
$$\Delta(\alpha T)=\alpha T\otimes T,\ \Delta(T^*)=T^*\otimes T^*= (T\otimes T)^*,\ \Delta(T_{n,m})=T_{n,m}\otimes T_{n,m}$$
Let $A,B$ be finite linear combinations of operators $T_{n,m}$: $$A=\sum_{n,m}\alpha_{n,m}T_{n,m},\ B=\sum_{k,l}\beta_{k,l}T_{k,l}.$$ Then we have $$ \Delta(A\cdot B)=\sum_{n,m}\sum_{k,l}\alpha_{n,m}\beta_{k,l}T_{n,m}T_{k,l}\otimes T_{n,m}T_{k,l}=\Delta(A)\cdot \Delta(B).$$
Therefore $\Delta$ is an involutive multiplicative linear map on the dense *-subalgebra in $\mathcal{T}$. It is clear that $\Delta$ is a unital homomorphism.
\par Now, by Coburn's Theorem \cite{Murphy} there exists a unique unital *-homomorphism of $\mathcal{T}$ to $\mathcal{T}\otimes \mathcal{T}$, satisfying relation (\ref{dt}). Hence, $\Delta$ is a unital *-homomorphism on $\mathcal{T}$.
\end{proof}

\begin{definition}  Compact quantum semigroup  $(\mathcal{A},\Delta)$ is called cocommutative if  
$$\Delta(a)=\sigma(\Delta(a)),\ a\in \mathcal{A}$$
where $\sigma\colon \mathcal{A}\otimes \mathcal{A}\to \mathcal{A}\otimes \mathcal{A}$ is the flip-automorphism given by $\sigma(a\otimes b)=b\otimes a$  for all $a,b\in \mathcal{A}$.
\end{definition}

\begin{remark}\label{comm} Obviously, compact quantum semigroup $(\mathcal{T},\Delta)$ is cocommutative. Moreover, by the above-mentioned Coburn's Theorem we have that $\Delta$ is isometric, i.e. is norm-preserving.
\begin{equation}\label{normd} \left\|\Delta(A)\right\|=\left\|A\right\|\ for\ all\ A\in \mathcal{T}.\end{equation}
 \end{remark}

\begin{definition}\cite{Woronowicz} A compact quantum semigroup $(\mathcal{A},\Delta)$ is called compact quantum group if the linear subspaces \begin{equation}\label{w1}\{(a\otimes I)\Delta(b)\colon a,b\in \mathcal{A}\} \end{equation}
 \begin{equation}\label{w2}\{(I\otimes a)\Delta(b)\colon a,b\in \mathcal{A}\}  \end{equation}
are dense in $\mathcal{A}\otimes \mathcal{A}$.   \end{definition}
 \begin{theorem}\label{no} $(\mathcal{T},\Delta)$ is not a compact quantum group. \end{theorem}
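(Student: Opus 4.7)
The plan is to refute density condition (\ref{w1}), which is enough to show that $(\mathcal{T},\Delta)$ is not a compact quantum group. Morally, $\Delta(T)=T\otimes T$ forces every element of $\Delta(\mathcal{T})$ onto a very restricted ``diagonal,'' and because $T$ is only an isometry (with $T^{*}T=1$ but $TT^{*}\neq 1$), no amount of left-multiplication by $\mathcal{T}\otimes I$ can supply the missing $T^{*m}$-content needed on the second tensor factor. I will make this rigorous via a slice map argument.

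The map I use is $\omega_{0}\otimes\mathrm{id}\colon\mathcal{T}\otimes\mathcal{T}\to\mathcal{T}$, where $\omega_{0}(X)=\langle Xe_{0},e_{0}\rangle$ is the vector state at $e_{0}$. It is bounded and surjective, since $(\omega_{0}\otimes\mathrm{id})(1\otimes Y)=Y$ for every $Y\in\mathcal{T}$. The central calculation is on generators $\Delta(T_{n,m})=T_{n,m}\otimes T_{n,m}$: because $T^{*m}e_{0}=0$ for all $m\geq 1$,
$$(\omega_{0}\otimes\mathrm{id})\bigl((a\otimes I)\Delta(T_{n,m})\bigr)=\langle aT_{n,m}e_{0},e_{0}\rangle\,T_{n,m}=\delta_{m,0}\,\langle ae_{n},e_{0}\rangle\,T^{n}.$$
Extending by linearity to finite linear combinations of the $T_{n,m}$, and then by norm-continuity of $\Delta$ and of the slice map to all $b\in\mathcal{T}$, I would conclude that $\omega_{0}\otimes\mathrm{id}$ sends the entire linear span $\{(a\otimes I)\Delta(b):a,b\in\mathcal{T}\}$ into $\mathcal{A}_{+}:=\overline{\mathrm{span}}\{T^{n}:n\geq 0\}$.

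Next I would check that $\mathcal{A}_{+}\subsetneq\mathcal{T}$: composing with the quotient $q\colon\mathcal{T}\to\mathcal{T}/\mathcal{K}\cong C(S^{1})$ that sends $T$ to $z$, the image $q(\mathcal{A}_{+})$ lies inside the disk algebra $\overline{\mathrm{span}}\{z^{n}:n\geq 0\}$, a classical proper subspace of $C(S^{1})$ that does not contain $\bar{z}=q(T^{*})$. Hence $T^{*}\notin\mathcal{A}_{+}$, so $\mathcal{A}_{+}$ is a proper closed subspace of $\mathcal{T}$.

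Putting these together: if (\ref{w1}) held, the image of the dense span under the continuous surjection $\omega_{0}\otimes\mathrm{id}$ would be dense in $\mathcal{T}$; but that image sits in the proper closed subspace $\mathcal{A}_{+}$, a contradiction. The only steps requiring any care are the continuity extension from finite sums to arbitrary $b\in\mathcal{T}$, which is routine since $\Delta$ is norm-preserving by Remark \ref{comm}, and the classical input that the disk algebra is a proper subspace of $C(S^{1})$; I anticipate no real obstacle. The same argument with the two tensor factors interchanged also refutes (\ref{w2}), consistent with the cocommutativity of $\Delta$.
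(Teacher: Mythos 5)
Your argument is correct, and it takes a genuinely different route from the paper's. The paper assumes density of (\ref{w1}), fixes the single witness $A=T_{1,0}\otimes T_{0,1}$ and the vector $x=e_0\otimes e_1$, and computes that every element of the span of (\ref{w1}) sends $x$ into $l^2(\mathbb{Z}_+)\otimes\overline{\mathrm{span}}\{e_k\colon k\geq1\}$, whereas $Ax=e_1\otimes e_0$ has its second leg orthogonal to that subspace; the resulting norm gap contradicts density. You instead slice with the vector state $\omega_0$ at $e_0$ and show that $\omega_0\otimes\mathrm{id}$ carries the \emph{entire} closed span of (\ref{w1}) into the proper closed subspace $\mathcal{A}_+=\overline{\mathrm{span}}\{T^n\colon n\geq0\}$, while being a continuous surjection onto $\mathcal{T}$. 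Both proofs ultimately exploit the same feature, $T^{*}e_0=0$, i.e.\ the failure of $TT^{*}=I$; but yours is more structural: it exhibits a concrete proper closed subspace containing the closure of (\ref{w1}) and so shows, for instance, that any $I\otimes Y$ with $Y\notin\mathcal{A}_+$ is a witness to non-density, at the price of invoking the standard boundedness of slice maps on the minimal tensor product and the properness of the disk algebra in $C(S^1)$. (For the latter you could even stay inside $\mathcal{T}$: the functional $X\mapsto\langle Xe_1,e_0\rangle$ vanishes on every $T^n$ but equals $1$ on $T^{*}$, so $T^{*}\notin\mathcal{A}_+$ without passing to the quotient.) The paper's computation is more elementary and self-contained; each step of yours is routine, and your closing remark that cocommutativity transfers the argument to (\ref{w2}) is also consistent with the paper, which only treats (\ref{w1}) explicitly.
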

 \begin{proof} Let us show that the subspace (\ref{w1}) is not dense in $\mathcal{T}\otimes \mathcal{T}$. Assume the converse. Consider operator $A=T_{1,0}\otimes T_{0,1}$ and vector $x=e_0\otimes e_1\in l^2(\mathbb{Z}_+)\otimes l^2(\mathbb{Z}_+)$. Then $A$ is approximated by a finite linear combination 
 $$B=\sum_{k,l,n,m}c_{k,l,n,m}(T_{n,m}\otimes I)(T_{k,l}\otimes T_{k,l}).$$
 Clearly, we have
 $$(T_{k,l}\otimes T_{k,l})x=\left\{\begin{matrix}
0& \mbox{ fir } l\neq0\\
e_k\otimes e_{k+1}& \mbox{ for } l=0. \end{matrix}\right.$$
 Calculate value of $B$ on vector $x$:
 $$Bx=\sum_{k,l,n,m}c_{k,l,n,m}(T_{n,m}\otimes I)(T_{k,l}\otimes T_{k,l})(e_0\otimes e_1)=$$ $$= \sum_{k,n,m} c_{k,0,n,m}(T_{n,m}\otimes I)e_k\otimes e_{k+1}=\sum_{k} x_k\otimes e_{k+1},$$
 where $x_k$ are certain vectors in $l^2(\mathbb{Z}_+)$. From the other hand,
  $$Ax=e_1\otimes e_0.$$
  Hence, $e_{k+1}=e_0$, which is nonsense. \end{proof}
 
  \begin{definition} Bialgebra $\mathcal{A}$ is called a weak Hopf algebra \cite{Li}, if there exists a linear map $S\colon \mathcal{A}\to \mathcal{A}$ satisfying the following conditions
 \begin{equation}\label{wa} \mu(\mathrm{id}\otimes S\otimes\mathrm{id})\Delta^2=\mathrm{id},\ \mu( S\otimes\mathrm{id}\otimes S)\Delta^2=S \end{equation}
 where $\Delta^2=(\Delta\otimes \mathrm{id})\Delta$, and $\mu\colon \mathcal{A}\odot \mathcal{A}\to \mathcal{A}$ is a linear associative map given by
 $$\mu(a\otimes b)=ab.$$  $S$ is called a weak antipode.
  \end{definition}
  Recall that $\{T_{n,m}\}_{n,m\in\mathbb{Z}_+}$ is an inverse semigroup. Denote by $\mathcal{P}$ the algebra, generated by all finite linear combinations of the elements from the set $\{T_{n,m}\}_{n,m\in\mathbb{Z}_+}$. Obviously, $\mathcal{P}$ is dense in $\mathcal{T}$.  Denote by $S$ the map that takes each element of this set to it's inverse element:
$$S(T_{n,m})=T_{m,n}$$ 
$S$ is naturally extended to a linear antimultiplicative map on $\mathcal{P}$. It is evident that the conditions (\ref{wa}) are satisfied for $S, \ \Delta$ on $\mathcal{P}$. We obtain the following statement.
\begin{theorem}\label{weak} The compact quantum semigroup $(\mathcal{T},\Delta)$ contains a dense weak Hopf algebra $(\mathcal{P},\Delta,S)$.\end{theorem}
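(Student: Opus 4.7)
The plan is to unpack the definitions and reduce everything to identities on the generating set $\{T_{n,m}\}_{n,m\in\mathbb{Z}_+}$, which is already known to be an inverse semigroup closed under multiplication. Since products of the $T_{n,m}$ stay in this set by the multiplication table given in Section~\ref{sec:pre}, the algebra $\mathcal{P}$ is simply the linear span of $\{T_{n,m}\}$, and density in $\mathcal{T}$ is already recorded. The first thing to verify is that $(\mathcal{P},\Delta)$ is an honest bialgebra: since $\Delta(T_{n,m})=T_{n,m}\otimes T_{n,m}$ lies in $\mathcal{P}\odot\mathcal{P}$, the restriction of the comultiplication lands in the algebraic tensor product, and coassociativity plus multiplicativity are inherited from Theorem~\ref{cqt}.

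Next I would establish that $S$, defined on the semigroup by $S(T_{n,m})=T_{m,n}$ and extended linearly, is antimultiplicative on $\mathcal{P}$. This amounts to checking $S(T_{n,m}T_{k,l})=S(T_{k,l})S(T_{n,m})=T_{l,k}T_{m,n}$, which follows by a short case analysis on the three regimes $k>m$, $k<m$, $k=m$ of the multiplication table in Section~\ref{sec:pre} (alternatively, it is the standard fact that taking generalized inverses reverses products in any inverse semigroup).

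The heart of the proof is to verify the two weak Hopf axioms (\ref{wa}) on generators and then extend by linearity. Iterating the coproduct gives
\[
\Delta^2(T_{n,m})=(\Delta\otimes\mathrm{id})\Delta(T_{n,m})=T_{n,m}\otimes T_{n,m}\otimes T_{n,m}.
\]
Applying $\mathrm{id}\otimes S\otimes \mathrm{id}$ then $\mu$ (viewed as the associative triple multiplication) produces $T_{n,m}T_{m,n}T_{n,m}$, which equals $T_{n,m}$ by the defining inverse-semigroup relation recorded in Section~\ref{sec:pre}. Analogously, $\mu(S\otimes\mathrm{id}\otimes S)\Delta^2(T_{n,m})=T_{m,n}T_{n,m}T_{m,n}=T_{m,n}=S(T_{n,m})$, again by the same inverse-semigroup identity (applied with the roles of $(n,m)$ and $(m,n)$ swapped). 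Since both sides of each axiom are linear in the argument, the identities propagate from the spanning set to all of $\mathcal{P}$.

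There is no serious obstacle here; the proof is essentially a translation of the inverse-semigroup law $xx^{-1}x=x$ into the coalgebraic language. The only mild care needed is in the verification that $S$ is consistently defined as an antimultiplicative map on $\mathcal{P}$, which is a purely combinatorial check against the multiplication table.
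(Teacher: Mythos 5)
Your proposal is correct and follows essentially the same route as the paper, which simply asserts that the conditions (\ref{wa}) are ``evident'' from the inverse-semigroup relation $T_{n,m}T_{m,n}T_{n,m}=T_{n,m}$; you supply the explicit computation $\mu(\mathrm{id}\otimes S\otimes\mathrm{id})\Delta^2(T_{n,m})=T_{n,m}T_{m,n}T_{n,m}$ and the antimultiplicativity check that the paper leaves implicit. No gaps.
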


\section{The Dual Algebra and the Haar Functional}\label{sec:dual}
\par Denote by $\mathcal{T}^*$ the space of all linear continuous functionals on $\mathcal{T}$. Define multiplication on $\mathcal{T}^*$.
\begin{equation}\label{pr} (\rho\ast \varphi) (A)= (\rho\otimes \varphi)(\Delta(A))\end{equation}
\begin{theorem}\label{ts} $\mathcal{T}^*$ is a commutative banach algebra with multiplication given by (\ref{pr}). \end{theorem}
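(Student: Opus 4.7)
The plan is to verify the Banach algebra axioms for $(\mathcal{T}^*, \ast)$ by translating each axiom into a property of $\Delta$ that has already been established: linearity of $\ast$ from bilinearity of the algebraic tensor product, associativity from coassociativity (\ref{coass}), commutativity from the cocommutativity noted in Remark~\ref{comm}, and submultiplicativity of the norm from the isometry $\|\Delta(A)\|=\|A\|$ in (\ref{normd}) together with the standard norm estimate on a tensor product of functionals.

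First I would check that $\rho \ast \varphi$ is a well-defined element of $\mathcal{T}^*$. For any two bounded linear functionals $\rho,\varphi$ on the C*-algebra $\mathcal{T}$, the algebraic tensor product $\rho\otimes\varphi$ extends uniquely to a bounded linear functional on the minimal C*-tensor product $\mathcal{T}\otimes\mathcal{T}$ with $\|\rho\otimes\varphi\|=\|\rho\|\|\varphi\|$ (this is a standard fact: it holds for states by the characterization of states on $\mathcal{T}\otimes\mathcal{T}$, and extends to all bounded functionals via their decomposition into linear combinations of states). Composing with the *-homomorphism $\Delta$ then gives $|(\rho\ast\varphi)(A)|\le \|\rho\|\|\varphi\|\|\Delta(A)\|=\|\rho\|\|\varphi\|\|A\|$, so $\rho\ast\varphi\in\mathcal{T}^*$ with $\|\rho\ast\varphi\|\le\|\rho\|\|\varphi\|$. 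Linearity in each argument is immediate from the construction.

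Next I would check associativity by direct calculation: for $A\in\mathcal{T}$,
\begin{equation*}
((\rho\ast\varphi)\ast\psi)(A)=(\rho\otimes\varphi\otimes\psi)\bigl((\Delta\otimes\mathrm{id})\Delta(A)\bigr),
\end{equation*}
\begin{equation*}
(\rho\ast(\varphi\ast\psi))(A)=(\rho\otimes\varphi\otimes\psi)\bigl((\mathrm{id}\otimes\Delta)\Delta(A)\bigr),
\end{equation*}
and these agree by the coassociativity identity (\ref{coass}). For commutativity, Remark~\ref{comm} gives $\sigma\circ\Delta=\Delta$, so
\begin{equation*}
(\rho\ast\varphi)(A)=(\rho\otimes\varphi)(\Delta(A))=(\rho\otimes\varphi)(\sigma(\Delta(A)))=(\varphi\otimes\rho)(\Delta(A))=(\varphi\ast\rho)(A).
\end{equation*}
Completeness of $\mathcal{T}^*$ is automatic since it is the Banach-space dual of $\mathcal{T}$.

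The main technical point, and the only step that is not a formal consequence of the structure of $(\mathcal{T},\Delta)$, is the extension and norm bound for $\rho\otimes\varphi$ on the minimal C*-tensor product; everything else reduces to unwinding the definitions and invoking properties of $\Delta$ recorded earlier in the paper. I expect the proof to be short once this standard fact is cited.
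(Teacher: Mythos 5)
Your proposal is correct and follows essentially the same route as the paper: submultiplicativity of the norm via the isometry $\|\Delta(A)\|=\|A\|$ from Remark~\ref{comm} combined with the norm property of the tensor product of functionals on the minimal $C^*$-tensor product, and commutativity via cocommutativity. You spell out more explicitly the two points the paper leaves implicit (the extension of $\rho\otimes\varphi$ to $\mathcal{T}\otimes\mathcal{T}$ with $\|\rho\otimes\varphi\|=\|\rho\|\,\|\varphi\|$, and associativity from coassociativity (\ref{coass})), which is a welcome clarification rather than a deviation.
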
\begin{proof} 
Indeed, it is clear that the linear space $\mathcal{T}^*$ becomes an algebra with the operation $\ast$. Moreover, $\mathcal{T}^*$ is endowed with the  supremum norm.
$$\left\|\rho_1\ast\rho_2\right\|=\sup_{\left\|A\right\|\leq1}\left|(\rho_1\ast\rho_2)(A)\right|$$ 
It is sufficient to check the submultiplicativity of this norm on $\mathcal{T}^*$. Consider $\rho_1,\rho_2 \in \mathcal{T}^*$. If we combine relations (\ref{normd}), (\ref{pr}) with the properties of the functionals tensor product, we get
$$\left\|\rho_1\ast\rho_2\right\|=\sup_{\left\|A\right\|\leq1}\left|(\rho_1\ast\rho_2)(A)\right|=\sup_{\left\|A\right\|\leq1}\left|(\rho_1\otimes\rho_2)\Delta(A)\right|=$$ $$=\sup_{\left\|\Delta(A)\right\|\leq1}\left|(\rho_1\otimes\rho_2)\Delta(A)\right|\leq \sup_{B\in\mathcal{T}\otimes \mathcal{T},\  \left\|B\right\|\leq1}\left|(\rho_1\otimes\rho_2)(B)\right|=\left\|\rho_1\right\|\cdot \left\|\rho_2\right\|.$$
Thus, $\mathcal{T}^*$ is the Banach algebra with the induced norm. The cocommutativeness of the compact quantum semigroup $(\mathcal{T},\Delta)$ (see Remark \ref{comm}) implies that $\mathcal{T}^*$ is commutative.
\end{proof}

\begin{lemma}\label{unital} The algebra $\mathcal{T}^*$ is unital. The unit is a functional $\epsilon$ given by \begin{equation}\label{unit}\epsilon(T_{n,m})=1\ for\ all\ n,m\in\mathbb{Z}_+.\end{equation} 
\end{lemma}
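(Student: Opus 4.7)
The plan is to establish $\epsilon$ as a well-defined bounded functional on $\mathcal{T}$ and then verify the unit property by a direct calculation on the dense $*$-subalgebra $\mathcal{P}$ generated by the $T_{n,m}$.

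First I would show that the formula (\ref{unit}) actually defines a $*$-character of $\mathcal{T}$. The cleanest route is to identify $\epsilon$ with the composition $\mathcal{T} \twoheadrightarrow \mathcal{T}/\mathcal{K} \cong C(S^1) \xrightarrow{\mathrm{ev}_1} \mathbb{C}$, using the classical fact (alluded to in the introduction) that modding out by the compacts sends $T$ to the identity function $z$ on $S^1$. Then $T \mapsto z(1) = 1$, and hence $T_{n,m} \mapsto 1$ for all $n,m$, matching (\ref{unit}). As a composition of $*$-homomorphisms into $\mathbb{C}$, the functional $\epsilon$ is automatically contractive. Alternatively, one may verify directly on $\mathcal{P}$ that the assignment $T_{n,m} \mapsto 1$ respects both the semigroup multiplication rule and the involution $T_{n,m}^{*} = T_{m,n}$; this defines a $*$-character on $\mathcal{P}$, which then extends continuously to $\mathcal{T}$.

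Next I would exploit cocommutativity of $\Delta$ (Remark \ref{comm}): the resulting commutativity of $\mathcal{T}^*$ already proved in Theorem \ref{ts} reduces the unit property to checking $\epsilon \ast \rho = \rho$ for every $\rho \in \mathcal{T}^*$. From the proof of Theorem \ref{cqt} we have the group-like identity $\Delta(T_{n,m}) = T_{n,m} \otimes T_{n,m}$, so for any finite sum $A = \sum c_{n,m} T_{n,m} \in \mathcal{P}$ a one-line calculation gives
$$(\epsilon \ast \rho)(A) = (\epsilon \otimes \rho)\Delta(A) = \sum c_{n,m}\, \epsilon(T_{n,m})\, \rho(T_{n,m}) = \rho(A).$$
Since both $\epsilon \ast \rho$ and $\rho$ are continuous on $\mathcal{T}$ (the former by Theorem \ref{ts}) and coincide on the dense subspace $\mathcal{P}$, they agree on all of $\mathcal{T}$.

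The only genuine obstacle is the first step: the bare formula (\ref{unit}) is merely a prescription on generators, and it must be shown to extend consistently to a bounded functional on the full $C^{*}$-algebra. Once $\epsilon$ is packaged as an honest $*$-character, the unit property is immediate from the group-like form of $\Delta$ on the $T_{n,m}$ together with a routine density argument.
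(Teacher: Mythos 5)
Your proposal is correct and takes essentially the same route as the paper: the paper also realizes $\epsilon$ as $\delta_0\circ\gamma$, where $\gamma\colon\mathcal{T}\to\mathcal{T}/\mathcal{K}\cong C(S^1)$ is the symbol map and $\delta_0$ is evaluation at $1$, and then verifies $(\epsilon\ast\rho)(T_{n,m})=\epsilon(T_{n,m})\rho(T_{n,m})=\rho(T_{n,m})$. (Your parenthetical alternative --- extending a $*$-character from the dense, non-closed subalgebra $\mathcal{P}$ --- would require a separate boundedness argument, but your main argument does not depend on it.)
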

\begin{proof}
We must check that $\epsilon$ is a contionuous functional. Denote by $\mathcal{K}$ the algebra af compact operators on $l^2(\mathbb{Z}_{+})$. The quotient algebra $\mathcal{T}/\mathcal{K}$ is isometrically isomorphic to the algebra of continuous functions on the unit circle $C(S^1)$ \cite{Murphy}.  This isomorphism takes each element $T_{n,m}$ to the function $e^{ik\theta}$, where $k=\mathrm{ind}\ T_{n,m}$. Denote by $\gamma$ the composition of this isomorphism and a quotient map. $$\gamma\colon\mathcal{T}\to\mathcal{T}/K\to C(S^1)$$ Obviously, $\gamma$ is a continuous homomorphism. Consider the Dirac measure on the circle $S^1$.
$$\delta_0(f)=f(1).$$
$\delta_0$ is a continuous functional on $S^1$ with norm equal to 1, $\delta_0(e^{ik\theta})=1$ for all $k$. It is evident that $\epsilon=\delta_0\circ\gamma$. Therefore, $\epsilon$ is a continuous functional. Moreover, for any $n,m\in\mathbb{Z}_+$ $$(\epsilon\ast\rho)(T_{n,m})=\epsilon(T_{n,m})\cdot \rho(T_{n,m})=\rho(T_{n,m}).$$
This completes the proof.\end{proof}
\begin{remark}\label{counit} The functional $\epsilon$ defined by equation (\ref{unit}) is a counit \cite{Woronowicz} for the weak Hopf algebra $\mathcal{P}$. However, by Lemma \ref{unital}, we have that $\epsilon$ acts on the entire algebra $\mathcal{T}$. Therefore $\epsilon$ appears to be the counit for the compact quantum semigroup $(\mathcal{T},\Delta)$. 
\end{remark}

\begin{definition} A state $h\in\mathcal{T}^*$ is called a Haar functional \cite{Woronowicz} in $\mathcal{T}^*$ if the following conditions hold for any $\rho\in \mathcal{T}^*$:
\begin{equation}\label{haar}h\ast\rho=\rho\ast h=\lambda_\rho \cdot h,\ \lambda_\rho\in\mathbb{C}.\end{equation} 
  \end{definition}
  
\begin{theorem}\label{haart} A state $h$ is a Haar functional in $\mathcal{T}^*$ iff
\begin{equation}\label{ht}h(T_{n,m})=\left\{\begin{matrix}
1,& \ if\ n=m=0\\
0,& \ if\ either\ n\neq 0 \ or\ m\neq 0 \end{matrix}\right. \end{equation}
 \end{theorem}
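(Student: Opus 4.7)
The plan is to split the biconditional into its two implications and to exploit the key structural fact that $\Delta(T_{n,m})=T_{n,m}\otimes T_{n,m}$, so the convolution product acts diagonally on the generating semigroup: for every $\rho\in\mathcal{T}^*$,
\[(\rho\ast h)(T_{n,m})=(h\ast\rho)(T_{n,m})=\rho(T_{n,m})\,h(T_{n,m}).\]

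For sufficiency, my candidate is the vector state $h(A):=\langle A e_0,e_0\rangle$. Because $T^{*}e_0=0$, one immediately checks that $h(T_{n,m})=\langle T^{n}T^{*m}e_0,e_0\rangle=\delta_{n,0}\delta_{m,0}$, so the formula (\ref{ht}) is realized, and $h$ is visibly a state. The Haar property then reduces to the identity above: on every generator $T_{n,m}$ we have $(h\ast\rho)(T_{n,m})=\delta_{n,0}\delta_{m,0}\,\rho(I)=\rho(I)\cdot h(T_{n,m})$. Extending by linearity this gives $h\ast\rho=\rho(I)\,h$ on the dense $*$-subalgebra $\mathcal{P}$, and since both sides are continuous functionals on $\mathcal{T}$ by Theorem \ref{ts}, equality propagates to all of $\mathcal{T}$ with $\lambda_\rho=\rho(I)$. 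The case $\rho\ast h$ is symmetric.

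For necessity, assume $h$ is a Haar functional. Plugging $T_{n,m}$ into $\rho\ast h=\lambda_\rho h$ and using the diagonal formula yields
\[\rho(T_{n,m})\,h(T_{n,m})=\lambda_\rho\,h(T_{n,m})\qquad(\rho\in\mathcal{T}^{*}).\]
Since $h$ is a state, $h(T_{0,0})=h(I)=1$, so setting $(n,m)=(0,0)$ forces $\lambda_\rho=\rho(I)$ for every $\rho$. Now if $h(T_{n,m})\neq 0$ for some $(n,m)\neq(0,0)$, the displayed relation gives $\rho(T_{n,m})=\rho(I)$ for all $\rho\in\mathcal{T}^{*}$; by Hahn--Banach this would imply $T_{n,m}=I$, contradicting for instance the fact that $T_{n,m}$ has Fredholm index $m-n$ while $I$ has index $0$, or that $T_{n,n}$ with $n\geq 1$ projects onto the proper subspace $l^{2}(\mathbb{Z}_{+}+n)$. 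Hence $h$ must vanish on every $T_{n,m}$ with $(n,m)\neq(0,0)$.

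The only real subtlety is locating the right candidate state in the sufficiency direction; once $h$ is identified with the vector functional at $e_0$, both positivity and the formula (\ref{ht}) are immediate, and the remainder of the argument is a density-and-continuity routine supported by Theorem \ref{ts}. The necessity direction is essentially just a Hahn--Banach separation argument combined with the cocharacterization $\lambda_\rho=\rho(I)$ forced by $h(I)=1$.
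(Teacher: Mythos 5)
Your proposal is correct and follows essentially the same route as the paper: both directions rest on the identity $(\rho\ast h)(T_{n,m})=\rho(T_{n,m})h(T_{n,m})$ coming from $\Delta(T_{n,m})=T_{n,m}\otimes T_{n,m}$, necessity is the same separation argument forcing $\rho(T_{n,m})=\rho(I)$ for all $\rho$, and sufficiency identifies $h$ with the vector state $A\mapsto (Ae_0,e_0)$ exactly as the paper does. No substantive differences worth noting.
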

\begin{proof}
Let us show that the Haar functional $h$ satisfies (\ref{ht}). Since $h$ is a state, then $h(T_{0,0})=1$. Suppose there exist numbers $n,m\in\mathbb{Z}_+$ such that $h(T_{n,m})\neq 0$. Then, by equations (\ref{haar}), for any functional $\rho$ we have
$$(h\ast\rho)(T_{n,m})=h(T_{n,m})\rho(T_{n,m})=\lambda_\rho h(T_{n,m}).$$
It means that $\rho(T_{n,m})=\lambda_\rho$. On the other hand, $\rho(I)=\lambda_\rho$. We have $\rho(T_{n,m})=\rho(I)$ for any functional $\rho$. But it is clear that there exists a functional with different values on $T_{n,m}$ and $I$. Hence, $h(T_{n,m})= 0$.
\par Now assume that $h$ is a functional which satisfies (\ref{ht}). It is easily proved that (\ref{haar}) holds for any functional $\rho$ on any operator $T_{n,m}$. To conclude the proof, it remains to note that for any $A\in \mathcal{T}$,
$$h(A)=(Ae_0,e_0).$$
Therefore, $h$ is linear and continuous on the entire algebra $\mathcal{T}$.\end{proof}

The next remark shows that the Haar functional in $\mathcal{T}^*$ can be obtained in the same way as in \cite{Woronowicz}, despite the fact that $(\mathcal{T},\Delta)$ is not a compact quantum group. 
\begin{remark}\label{cez}
Suppose $\rho$ is a faithful state on $\mathcal{T}$. Denote $\rho_n=\frac{1}{n}\sum\limits_{k=1}^n \rho^k$. Then $\lim\limits_{n\to\infty}\rho_n=h$.
\end{remark}
\begin{proof}
Let us show that $\left|\rho(T_{n,m})\right|<1$ if either $n\neq 0$ or $m\neq 0$. Assume that $\left|\rho(T_{n,m})\right|=1$. Then, using the properties of linear functionals on the $C^*$-algebras, we get
$$\rho(T_{n,m})\rho(T_{n,m}^*)\leq\rho(T_{n,m}^*T_{n,m}) $$ 
Therefore, $\rho(T_{m,m})=1$. This means that $\rho(T_{0,0}-T_{m,m})=0$. But the element $T_{0,0}-T_{m,m}$ is positive. Hence, $\rho$ is not faithful, which contradicts with the assumption. This implies 
$$\lim_{k\to\infty}\rho^k(T_{n,m})=0$$
for any numbers $n,m$ such that $n\neq 0$ or $m\neq 0$.
Thus, we have 
$$\lim_{k\to\infty}\rho_k(T_{n,m})= \left\{\begin{matrix}
1,& \ if\ n=m=0\\
0,& \ if\ either\ n\neq 0 \ or\ m\neq 0. \end{matrix}\right.$$
\end{proof}

\par Denote by $\mathcal{K}^\bot$ the space of functionals in $\mathcal{T}^*$, that have zero value on operators from $\mathcal{K}$:
$$\mathcal{K}^\bot=\{\rho\in\mathcal{T}^*\mid\ \rho|_{\mathcal{K}}=0\}.$$
\begin{lemma}\label{rot0} Suppose $\rho\in\mathcal{T}^*$. Then the next conditions are equivalent:
\begin{enumerate}
	\item $\rho\in \mathcal{K}^\bot$
	\item $\rho(T_{n,m})=\rho(T_{k,l})$ if $m-n=l-k$.
\end{enumerate}
\end{lemma}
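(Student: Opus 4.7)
The plan is to use the quotient identification $\mathcal{T}/\mathcal{K}\cong C(S^{1})$ already recalled in the proof of Lemma~\ref{unital}, under which every $T_{n,m}$ descends to $e^{i(m-n)\theta}$, a function depending only on the index $m-n$.

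For the implication (1)~$\Rightarrow$~(2), I would simply observe that if $\rho|_{\mathcal{K}}=0$ then $\rho$ factors through the quotient as $\rho=\widetilde{\rho}\circ\gamma$ for some continuous functional $\widetilde{\rho}\in C(S^{1})^{*}$. Evaluating, $\rho(T_{n,m})=\widetilde{\rho}(e^{i(m-n)\theta})$ is determined entirely by the integer $m-n$, which is exactly (2).

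For the converse (2)~$\Rightarrow$~(1), the key step is to establish the rank-one identity
\[
T_{n,m}-T_{n+1,m+1}\;=\;T^{n}(I-TT^{*})T^{*m}\;=\;e_{n}\otimes e_{m}^{*},
\]
where $e_{n}\otimes e_{m}^{*}$ denotes the matrix unit sending $e_{m}\mapsto e_{n}$ and killing every other basis vector. This is a short direct calculation using the fact that $I-TT^{*}=T_{0,0}-T_{1,1}$ is the rank-one projection $e_{0}\otimes e_{0}^{*}$. Because both Toeplitz operators on the left-hand side share the same index $m-n$, hypothesis (2) forces $\rho(e_{n}\otimes e_{m}^{*})=0$ for every $n,m\in\mathbb{Z}_{+}$. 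Since finite linear combinations of such matrix units form a norm-dense subspace of $\mathcal{K}$, continuity of $\rho$ then yields $\rho|_{\mathcal{K}}=0$.

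The whole argument rests on the rank-one identity displayed above; once this is verified by evaluating both sides on the basis vectors $e_{k}$, the remainder is routine --- a factoring-through-the-quotient step in one direction and an extension-by-continuity step from a dense spanning set in the other. I do not anticipate any serious obstacle.
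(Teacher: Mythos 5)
Your argument is correct, but it takes a genuinely different route from the paper's in both directions. For (1)~$\Rightarrow$~(2) the paper argues ``downstairs'': it observes that $T_{n,m}-T_{k,l}$ is itself a compact operator when $m-n=l-k$ (via the compactness criterion of Theorem~\ref{comp}), so $\rho$ kills the difference; you instead factor $\rho$ through the quotient $\mathcal{T}/\mathcal{K}\cong C(S^{1})$ and read off that $\rho(T_{n,m})$ depends only on the index $m-n$. These are two faces of the same fact and both are fine. The substantive divergence is in (2)~$\Rightarrow$~(1): the paper takes an arbitrary compact $A$, expands it in the series (\ref{atk}), and computes $\rho(A_{k})=\rho_{k}\sum_{n}\beta_{k,n}=0$ termwise --- an argument that tacitly interchanges the norm-continuous functional $\rho$ with a series $\sum_{n}\beta_{k,n}T_{n,n+k}$ that converges only in the strong operator topology, and with a sum over $k$ that converges only in a Ces\`aro sense, so it leans on unstated convergence facts. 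Your route avoids this entirely: the identity $T_{n,m}-T_{n+1,m+1}=T^{n}(I-TT^{*})T^{*m}=e_{n}\otimes e_{m}^{*}$ is a one-line computation on basis vectors, hypothesis (2) then kills every matrix unit, and norm-density of the span of matrix units in $\mathcal{K}$ together with norm continuity of $\rho$ finishes the proof. Your version is more elementary and arguably tighter than the paper's; the only ingredient you import is the standard identification $\mathcal{T}/\mathcal{K}\cong C(S^{1})$ with $T_{n,m}\mapsto e^{i(m-n)\theta}$, which the paper itself already invokes in the proof of Lemma~\ref{unital}.
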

\begin{proof}
Suppose $\rho\in \mathcal{K}^\bot$ and $m-n=l-k$. Then $T_{n,m}-T_{k,l}$ is a compact operator by Theorem \ref{comp}. Therefore, $\rho(T_{n,m})=\rho(T_{k,l})$.\par Now assume that $A$ is a compact operator, represented by (\ref{atk}). Take $k>0$, and denote $\rho(T_{n,n+k})=\rho_k$. Then, by Theorem \ref{comp}, we obtain
$$\rho(A_k)=\sum_{n=0}^\infty \beta_{k,n}\rho(T_{n,n+k})=\rho_k\sum_{n=0}^\infty \beta_{k,n}=0.$$
Similarly, if $k\leq0$, then $\rho(A_k)=0$. Thus, $\rho(A)=0$. \end{proof}
\begin{corollary}\label{t0a} $\mathcal{K}^\bot$ is a subalgebra in $\mathcal{T}^*$.
\end{corollary}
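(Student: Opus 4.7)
The plan is to use the characterization of $\mathcal{K}^\bot$ given by Lemma \ref{rot0}: a functional $\rho \in \mathcal{T}^*$ lies in $\mathcal{K}^\bot$ if and only if the value $\rho(T_{n,m})$ depends only on the index $m - n$. Linear-subspace properties of $\mathcal{K}^\bot$ are immediate from its definition as an annihilator, so the only thing to verify is closure under the convolution product $\ast$ from (\ref{pr}).

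The key step is a direct computation using $\Delta(T_{n,m}) = T_{n,m} \otimes T_{n,m}$. Given $\rho, \varphi \in \mathcal{K}^\bot$, one has
\begin{equation*}
(\rho \ast \varphi)(T_{n,m}) = (\rho \otimes \varphi)(\Delta(T_{n,m})) = \rho(T_{n,m}) \, \varphi(T_{n,m}).
\end{equation*}
If $m - n = l - k$, then by Lemma \ref{rot0} applied to each factor, $\rho(T_{n,m}) = \rho(T_{k,l})$ and $\varphi(T_{n,m}) = \varphi(T_{k,l})$, hence $(\rho \ast \varphi)(T_{n,m}) = (\rho \ast \varphi)(T_{k,l})$. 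Applying the converse direction of Lemma \ref{rot0} yields $\rho \ast \varphi \in \mathcal{K}^\bot$.

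There is really no obstacle here, since the comultiplication is cocommutative and diagonal on the generators $T_{n,m}$, so the convolution of two functionals simply multiplies their values on each $T_{n,m}$. The index-additivity $\mathrm{ind}(T_{n,m}) = m - n$ is preserved by this pointwise multiplication, which is exactly what the lemma requires.
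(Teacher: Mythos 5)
Your proof is correct and follows essentially the same route as the paper: both verify condition (2) of Lemma \ref{rot0} for $\rho\ast\varphi$ via the computation $(\rho\ast\varphi)(T_{n,m})=\rho(T_{n,m})\,\varphi(T_{n,m})$ coming from $\Delta(T_{n,m})=T_{n,m}\otimes T_{n,m}$. Nothing further is needed.
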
\begin{proof} Suppose $\rho,\varphi\in\mathcal{K}^\bot$. Let us show that the condition (2) of Lemma \ref{rot0} holds for $\rho\ast\varphi$. Take $m,n,l,k$ such that  $m-n=l-k$. Then we have
$$(\rho\ast\varphi)(T_{n,m})=\rho(T_{n,m})\varphi(T_{n,m})=\rho(T_{k,l})\varphi(T_{k,l})=(\rho\ast\varphi)(T_{k,l}).$$
\end{proof}

\begin{theorem}\label{t0ms} The algebra $\mathcal{K}^\bot$ is isomorphic to the algebra $M(S^1)$ of regular Borel measures on the circle. Moreover, the multiplication $\ast$, given by (\ref{pr}) coincides with the convolution in $M(S^1)$. \end{theorem}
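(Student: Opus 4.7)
The plan is to pull back the Riesz--Markov--Kakutani identification $C(S^1)^\ast\cong M(S^1)$ along the continuous homomorphism $\gamma\colon\mathcal{T}\to C(S^1)$ constructed in the proof of Lemma~\ref{unital}, and then to show that under this identification the multiplication $\ast$ on $\mathcal{K}^\bot$ becomes the usual convolution on $M(S^1)$.

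First, since $\ker\gamma=\mathcal{K}$, every $\rho\in\mathcal{K}^\bot$ factors uniquely as $\rho=\tilde\rho\circ\gamma$ with $\tilde\rho\in C(S^1)^\ast$, and this correspondence is an isometric linear bijection $\mathcal{K}^\bot\to(\mathcal{T}/\mathcal{K})^\ast\cong C(S^1)^\ast$; composing with Riesz--Markov gives an isometric linear bijection $\Phi\colon\mathcal{K}^\bot\to M(S^1)$, $\rho\mapsto\mu_{\tilde\rho}$, characterised by $\rho(A)=\int_{S^1}\gamma(A)\,d\mu_{\tilde\rho}$. Next, I would check that $\gamma$ intertwines $\Delta$ with the standard group comultiplication $\Delta_0$ on $C(S^1)$ defined by $\Delta_0(f)(s,t)=f(st)$. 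On the generators $T_{n,m}$ we have $(\gamma\otimes\gamma)\Delta(T_{n,m})=e^{ik\theta}\otimes e^{ik\theta}$ with $k=m-n$, while $\Delta_0(\gamma(T_{n,m}))(s,t)=(st)^k=s^k t^k$, giving the same element of $C(S^1)\otimes C(S^1)$. Since $\mathcal{P}$ is dense in $\mathcal{T}$ and both $(\gamma\otimes\gamma)\Delta$ and $\Delta_0\gamma$ are bounded $\ast$-homomorphisms, the identity $(\gamma\otimes\gamma)\Delta=\Delta_0\gamma$ extends to all of $\mathcal{T}$.

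With the intertwining in place, for $\rho,\varphi\in\mathcal{K}^\bot$ and $A\in\mathcal{T}$ one computes $(\rho\ast\varphi)(A)=(\tilde\rho\otimes\tilde\varphi)\bigl((\gamma\otimes\gamma)\Delta(A)\bigr)=(\tilde\rho\otimes\tilde\varphi)\Delta_0(\gamma(A))=\iint\gamma(A)(st)\,d\mu_{\tilde\rho}(s)\,d\mu_{\tilde\varphi}(t)$, which by definition is the evaluation of the convolution $\mu_{\tilde\rho}\ast_c\mu_{\tilde\varphi}$ on $\gamma(A)$. Hence $\Phi(\rho\ast\varphi)=\Phi(\rho)\ast_c\Phi(\varphi)$, and combining with the isometry from step one shows $\Phi$ is an isometric isomorphism of Banach algebras. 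The main technical point, besides bookkeeping, is the intertwining identity $(\gamma\otimes\gamma)\Delta=\Delta_0\gamma$; once it is verified on the semigroup generators $T_{n,m}$ using $\Delta(T_{n,m})=T_{n,m}\otimes T_{n,m}$ and $\gamma(T_{n,m})=e^{i(m-n)\theta}$, the remainder is a routine application of Fubini and the definition of convolution of measures on $S^1$.
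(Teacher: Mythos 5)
Your proposal is correct and follows essentially the same route as the paper: both identify $\mathcal{K}^\bot$ with $(\mathcal{T}/\mathcal{K})^*\cong C(S^1)^*\cong M(S^1)$ via the quotient homomorphism and the Riesz representation theorem, and both reduce the convolution statement to the grouplike behaviour of $\Delta$ on the generators, which $\gamma$ sends to the characters $e^{ik\theta}$. The only cosmetic difference is that you package the generator computation as the intertwining identity $(\gamma\otimes\gamma)\Delta=\Delta_0\gamma$ and then apply Fubini once for all $A\in\mathcal{T}$, whereas the paper verifies the convolution formula directly on the operators $T_{n,n+k}$ and appeals to Lemma~\ref{rot0} to see that this suffices.
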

\begin{proof}
Let us show that the algebra $(\mathcal{T}/K)^*$ is isometrically isomorphic to $\mathcal{K}^\bot$. Indeed, let $\pi_1\colon\mathcal{T}\to\mathcal{T}/K$ be a canonical quotient homomorphism. $\pi_1$ induces the dual map $\pi_1^*\colon (\mathcal{T}/K)^*\to \mathcal{T}^*$, given by 
$$\pi_1^*(\varphi)(A)=\varphi(\pi_1(A)) \ for\ any\  \varphi\in (\mathcal{T}/K)^*,\ A\in \mathcal{T}.$$ 
Take a compact operator $A$ and $\varphi\in (\mathcal{T}/K)^*$. Then $\pi_1^*(\varphi)(A)=\varphi(\pi_1(A))=\varphi(0)=0$. Therefore, $\pi_1^*(\varphi)\in \mathcal{K}^\bot$. This means that $\pi_1^*\colon (\mathcal{T}/K)^*\to \mathcal{K}^\bot$.
\par Suppose $\rho\in \mathcal{K}^\bot$. For any equivalence class $[A]$ with $A\in \mathcal{T}$ define a functional $\rho'\in (\mathcal{T}/K)^*$ by
$$\rho'([A])=\rho(A).$$
 Since $\rho'([A])=\rho(A)=0$ for any compact operator $A$, the linear functional $\rho'$ is well-defined. Obviously, $\pi_1^*(\rho')=\rho$, 
i.e.  $\pi_1^*$ is a bijection of $(\mathcal{T}/K)^*$ on $\mathcal{K}^\bot$. Furthermore, $\pi_1^*$ is norm-preserving.
 \par We recall that $\mathcal{T}/K$ is isometrically isomorphic to the algebra $C(S^1)$ of continuous functions on the unit circle \cite{Murphy}.  This isomorphism takes each operator $T_{n,m}$ to the function $e^{ik\theta}$, where $k=\mathrm{ind}\ T_{n,m}$. Denote this isomorphism by $\pi_2\colon \mathcal{T}/K\to C(S^1)$, and the dual map $\pi_2^*\colon (C(S^1))^*\to (\mathcal{T}/K)^*$. $\pi_2^*$ is a bijection also. By Riesz theorem, for any continuous linear functional $\rho$ on $C(S^1)$ there exists a unique regular Borel measure $\mu_\rho$ on $S^1$ such that 
 \begin{equation}\label{riss}\rho(f)=\int\limits_{S^1}f(e^{i\theta})d\mu_\rho(\theta)\ for \ any\ f\in C(S^1).\end{equation}
 It is well known that $\gamma\colon M(S^1)\to (C(S^1))^*$, which takes each regular Borel measure $\mu_\rho$ on the circle to the linear continuous functional $\rho$ on $C(S^1)$ is linear bijective and norm-preserving. Thus, $\pi_1^*\circ\pi_2^*\circ\gamma\colon M(S^1)\to \mathcal{K}^\bot$ is a norm-preserving linear bijection. 
 \par The algebra $M(S^1)$ is endowed with a convolution $(\mu_1\ast\mu_2)$ \cite{Gofman}.
 определена свертка мер $(\mu_1\ast\mu_2)$ следующим образом:
 $$\int\limits_{S^1}f(e^{i\theta})d(\mu_1\ast\mu_2)(\theta)=\iint\limits_{S^1}f(e^{i\theta_1}e^{i\theta_2})d\mu_1(\theta_1)d\mu_2(\theta_2)$$
  Let us show the multiplicativity of $\pi_1^*\circ\pi_2^*\circ\gamma$ using (\ref{pr}). By Lemma (\ref{rot0}) it is sufficient to check it for operators $T_{n,n+k},\ n,k\in\mathbb{Z}_+$. To this end, take $\rho,\varphi\in\mathcal{K}^\bot$, and the corresponding functionals $\rho_1,\varphi_1\in (\mathcal{T}/K)^*$ and $\rho_2,\varphi_2\in (C(S^1))^*$.  
    $$(\rho\ast\varphi)(T_{n,n+k})=(\rho\otimes\varphi)(\Delta(T_{n,n+k}))=(\rho\otimes\varphi)(T_{n,n+k}\otimes T_{n,n+k})=$$ $$=\rho(T_{n,n+k})\cdot\varphi( T_{n,n+k})=$$
  $$=\rho_1([T_{n,n+k}])\cdot\varphi_1( [T_{n,n+k}])=\rho_2(e^{ik\theta_1})\cdot\varphi_2( e^{ik\theta_2})=$$ $$=\int\limits_{S^1}e^{ik\theta_1}d\mu_\rho(\theta_1)\int\limits_{S^1}e^{ik\theta_2}d\mu_\varphi(\theta_2)=\iint\limits_{S^1}e^{ik\theta_1}e^{ik\theta_2}d\mu_\rho(\theta_1)d\mu_\varphi(\theta_2)=$$ $$=\int\limits_{S^1}e^{ik\theta}d(\mu_\rho\ast\mu_\varphi)(\theta)$$ 
  This completes the proof of the Theorem.
\end{proof}

\begin{definition}A state $h\in \mathcal{K}^\bot$ is called a Haar functional \cite{Woronowicz} in $\mathcal{K}^\bot$, if the conditions (\ref{haar}) hold for any functional $\rho\in \mathcal{K}^\bot$.
  \end{definition}

\begin{theorem}\label{haart0} There exists the Haar functional in $\mathcal{K}^\bot$ and it corresponds to the Haar measure on the circle.\end{theorem}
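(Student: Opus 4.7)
The plan is to transport the normalized Haar (Lebesgue) measure $\frac{1}{2\pi}d\theta$ on $S^1$ to $\mathcal{K}^\bot$ via the isomorphism $\mathcal{K}^\bot \cong M(S^1)$ from Theorem \ref{t0ms}, and then to verify the Haar identity directly on the generators $T_{n,m}$. Concretely, I would define $h \in \mathcal{K}^\bot$ by $h(A) = \frac{1}{2\pi}\int_0^{2\pi}(\pi_2\circ\pi_1)(A)(e^{i\theta})\,d\theta$. Since $\pi_2\circ\pi_1\colon \mathcal{T}\to C(S^1)$ is a unital $*$-homomorphism and integration against Haar measure is a state on $C(S^1)$, the functional $h$ is automatically a state on $\mathcal{T}$ that annihilates $\mathcal{K}$. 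On generators one reads off $h(T_{n,m}) = \frac{1}{2\pi}\int_0^{2\pi}e^{i(m-n)\theta}d\theta = \delta_{n,m}$.

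To check the Haar property, I would take any $\rho\in\mathcal{K}^\bot$ and evaluate both sides of (\ref{haar}) on an arbitrary generator $T_{n,m}$, using that by Lemma \ref{rot0} a functional in $\mathcal{K}^\bot$ is determined by its values on the $T_{n,n+k}$. Since $\Delta(T_{n,m}) = T_{n,m}\otimes T_{n,m}$, a direct computation gives
$$(h\ast\rho)(T_{n,m}) = h(T_{n,m})\,\rho(T_{n,m}) = \delta_{n,m}\,\rho(T_{n,n}) = \delta_{n,m}\,\rho(I),$$
where the last step uses that $I-T_{n,n}$ is the finite-rank projection onto $\mathrm{span}\{e_0,\ldots,e_{n-1}\}$, hence compact, so $\rho(T_{n,n})=\rho(I)$ for $\rho\in\mathcal{K}^\bot$. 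Thus $h\ast\rho = \rho(I)\cdot h$, and by the cocommutativity noted in Remark \ref{comm} the same formula holds for $\rho\ast h$, giving $\lambda_\rho = \rho(I)$.

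The correspondence with Haar measure is built into the construction: $h$ is by definition the image of $\frac{1}{2\pi}d\theta$ under the isomorphism $\gamma\colon M(S^1)\to\mathcal{K}^\bot$ of Theorem \ref{t0ms}, and under that isomorphism the $\ast$-product in $\mathcal{K}^\bot$ is exactly convolution, so the identity $h\ast\rho=\rho(I)h$ is just the convolution-invariance of Haar measure.

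I do not anticipate a genuine obstacle: all the structural work was absorbed into Theorem \ref{t0ms}, which identifies $(\mathcal{K}^\bot,\ast)$ with $(M(S^1),\text{convolution})$; after that, the existence and uniqueness of a Haar functional on $\mathcal{K}^\bot$ reduce to the classical existence of Haar measure on the compact group $S^1$. The only minor verification worth writing out carefully is the reduction $\rho(T_{n,n})=\rho(I)$, which pins down the constant $\lambda_\rho$ and completes the correspondence.
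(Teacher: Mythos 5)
Your proof is correct and follows essentially the same route as the paper: the functional you construct is exactly the paper's $h_0$ (with $h_0(T_{n,m})=\delta_{n,m}$), obtained as the pullback of normalized Haar measure under the identification of Theorem \ref{t0ms}, which is precisely the correspondence $h_0(T_{n,n+k})=\hat{\mu}(k)$ that the paper records. Your explicit verification that $\lambda_\rho=\rho(I)$ via the compactness of $I-T_{n,n}$ is a detail the paper asserts as obvious, so it is a welcome addition rather than a divergence.
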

\begin{proof}
Define the functional $h_0$:
\begin{equation}\label{h0}h_0(T_{n,m})=\left\{\begin{matrix}
1,& \ if\ n=m\\
0,& \ if\ n\neq m \end{matrix}\right.\end{equation}
The condition (2) in Lemma \ref{rot0} holds for $h_0$ obviously, and the equations (\ref{haar}) hold for any $\rho\in \mathcal{K}^\bot$.  Suppose $\mu$ is the Haar measure  on the circle $S^1$, and $\hat{\mu}\colon\mathbb{Z}\to\mathbb{C}$  is the Fourier transform of $\mu$ \cite{Gofman}. 
$$\hat{\mu}(k)=\int\limits_{S^1}e^{-ik\theta}d\mu(\theta)$$
Then, by the definition of the Haar measure, we have 
$$\hat{\mu}(k)=\left\{\begin{matrix}
1,& \ if\ k=0\\
0,& \ if\ k\neq 0 \end{matrix}\right.$$
Therefore, $h_0(T_{n,n+k})=\hat{\mu}(k)$ for all $k$.  Thus, $h_0\in \mathcal{T}_0^*$ is a Haar functional in $\mathcal{K}^\bot$, corresponding to the Haar measure on the circle.\end{proof}

\end{document}